\theoremstyle{plain}
   \newtheorem{thm}{Theorem}[section]
   \newtheorem{prop}[thm]{Proposition}     
   \newtheorem{lem}[thm]{Lemma}
\theoremstyle{definition}
   \newtheorem{definition}[thm]{Definition}
   \newtheorem{remark}[thm]{Remark}
\numberwithin{thm}{section}
\DeclareFontFamily{OT1}{rsfs}{}
\DeclareFontShape{OT1}{rsfs}{n}{it}{<-> rsfs10}{}
\DeclareMathAlphabet{\curly}{OT1}{rsfs}{n}{it}
\newcommand{\ZZ}{\mathbb{Z}} % integers
\newcommand{\GG}{\mathbb{G}} % group
\newcommand{\QQ}{\mathbb{Q}} % rationals
\newcommand{\PP}{\mathbb{P}} % projective space, projectivization
\newcommand{\m}{\mathfrak{m}}         % maximal ideal of a local ring
\renewcommand{\O}{\mathcal O} % structure sheaf
\newcommand{\ov}{\overline}
\newcommand{\into}{\hookrightarrow}
\newcommand{\be}{\begin{eqnarray*}}
\newcommand{\ee}{\end{eqnarray*}}
\newcommand{\bne}[1]{\begin{eqnarray} \label{#1} }
\newcommand{\ene}{\end{eqnarray}}
\newcommand{\xym}{\xymatrix}
\newcommand{\bp}{\begin{pmatrix}}
\newcommand{\ep}{\end{pmatrix}}
\renewcommand{\H}{\operatorname{H}}
\newcommand{\Id}{\operatorname{Id}}
\newcommand{\Spec}{\operatorname{Spec}}
\newcommand{\PGL}{\operatorname{PGL}}
\newcommand{\Chow}{\operatorname{Chow}}
\newcommand{\Hilb}{\operatorname{Hilb}}
\newcommand{\Gr}{\operatorname{Gr}}     % Grassmannian
\newcommand{\ChowQ}{/\hspace{-1.2mm}/_{Ch}}
\newcommand{\HilbQ}{/\hspace{-1.2mm}/_{H}}
\newcommand{\quotient}{/\hspace{-1.2mm}/}
\begin{document}

\author{Noah~Giansiracusa and William~Danny~Gillam}
%\address{Department of Mathematics, Brown University, Providence, RI 02912}
%\email{noahgian@math.brown.edu, wgillam@math.brown.edu}
%\date{\today}
\title{On Kapranov's description of $\ov{M}_{0,n}$ as a Chow quotient}
\maketitle

\begin{abstract} We provide a direct proof, valid in arbitrary characteristic, of the result originally proven by Kapranov over $\mathbb{C}$ that the Hilbert quotient $(\PP^1)^n\HilbQ\PGL_2$ and Chow quotient $(\PP^1)^n\ChowQ\PGL_2$ are isomorphic to $\ov{M}_{0,n}$.  In both cases this is done by explicitly constructing the universal family of orbit closures and then showing that the induced morphism is an isomorphism onto its image.  The proofs of these results in many ways reduce to the case $n=4$; in an appendix we outline a formalism of this phenomenon relating to certain operads.
\end{abstract}

\section{Introduction}

In this paper we revisit \cite[Theorems 1.5.2, 4.1.8]{K1} where Kapranov shows, over $\mathbb{C}$, that $\ov{M}_{0,n}$ is isomorphic to both the Hilbert and Chow quotients of $(\PP^1)^n$ by the diagonal action of $\PGL_2$.  Our main result is a direct proof of these isomorphisms that is valid over an arbitrary algebraically closed field.  The idea is to construct a flat family over $\ov{M}_{0,n}$ such that the fiber over each $(C,p_1,\ldots,p_n)$ is a union of $\PGL_2$-orbit closures.   This induces a morphism to the Hilbert scheme parametrizing 3-dimensional subschemes of $(\PP^1)^n$.  We prove that this is an isomorphism onto its image, the Hilbert quotient.  Composition with the Hilbert-Chow morphism yields a map from $\ov{M}_{0,n}$ to the Chow quotient, and we show that this is also an isomorphism.  Each of these steps is explained in more detail below.

\subsection{Flat family}\label{intsec:Flat family}

The moduli space $\ov{M}_{0,n}$, $n \ge 3$, parameterizes \emph{stable} $n$-pointed rational curves; these are connected, proper curves of arithmetic genus zero with at worst nodal singularities such that the $n$-marked points are smooth and the dualizing sheaf twisted by the marked points is ample.  The genus condition means that such a curve is a union of irreducible components isomorphic to $\PP^1$ with dual graph forming a tree; the ampleness condition means that each component corresponding to a leaf of the tree carries at least two marked points and each component corresponding to a bivalent vertex carries at least one marked point.

Given a stable curve $(C,p_1,\ldots,p_n)\in\ov{M}_{0,n}$ and an irreducible component $D\subseteq C$, one can retract $C$ onto $D$ to obtain a configuration of $n$ possibly coincident points on $D\cong \PP^1$ that we call the \emph{component configuration} for $D$.  By viewing this as a point of $(\PP^1)^n$ one can form its $\PGL_2$-orbit closure.  The union over all irreducible components $D\subseteq C$ of these orbit closures, when given the reduced induced structure, is a subscheme we denote \[\mathcal{Z}(C,p_1,\ldots,p_n) \subseteq (\PP^1)^n.\]

\begin{thm}\label{thm:Zflat} The subschemes $\mathcal{Z}(C,p_1,\ldots,p_n)$ form a flat family $\ov{Z}$ over $\ov{M}_{0,n}$.
\end{thm}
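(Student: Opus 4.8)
The plan is to exhibit the family as a single closed subscheme $\ov Z\subseteq(\PP^1)^n\times\ov M_{0,n}$, prove it is flat over $\ov M_{0,n}$, and check along the way that the fibre of $\ov Z$ over $[C,p_1,\ldots,p_n]$ is $\mathcal Z(C,p_1,\ldots,p_n)$. I would take $\ov Z$ to be the scheme-theoretic closure, in $(\PP^1)^n\times\ov M_{0,n}$, of the universal $\PGL_2$-orbit closure over the interior $M_{0,n}$; this is an integral scheme dominating $\ov M_{0,n}$ whose generic fibre is the orbit closure $\overline{\PGL_2\cdot(x_1,\ldots,x_n)}$ of a configuration of $n$ distinct points, a threefold. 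Since $\ov M_{0,n}$ is smooth over $\ZZ$ it is an integral Noetherian scheme, so, realizing $(\PP^1)^n$ as a subvariety of projective space by the Segre embedding, flatness of $\ov Z\to\ov M_{0,n}$ is equivalent to the fibres of $\ov Z$ all having the same Hilbert polynomial (Hartshorne III.9.9). The problem thus splits into: (a) identify each fibre $\ov Z_{[C,\vec p]}$ scheme-theoretically with the reduced union of orbit closures $\mathcal Z(C,\vec p)$; (b) check that this Hilbert polynomial is independent of $[C,\vec p]$. Both will follow from the boundary analysis below, which reduces everything to $n=4$; note already that stability forces every component $D\subseteq C$ to carry at least three distinct special points, so each component configuration has a $3$-dimensional orbit and $\mathcal Z(C,\vec p)$ is equidimensional of dimension $3$, and that (b) then unwinds, by inclusion-exclusion over the components of $C$ and their intersections, into a tree-combinatorial count built from the $n=4$ case.

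To run this I would localize on $\ov M_{0,n}$ and use its boundary structure. \'Etale-locally near a point corresponding to a stable curve $C_0$ with dual tree $\tau$, the space $\ov M_{0,n}$ is a product of the interior moduli spaces $M_{0,n_v}$ of the components of $C_0$ with one copy of $\ov M_{0,4}$ for each node of $\tau$, the smoothing parameter of a node being the coordinate on its $\ov M_{0,4}$-factor and the associated boundary divisor its zero locus. I expect $\ov Z$ to respect this decomposition---each node independently interpolating, exactly as for $n=4$, between a union of two orbit closures and a single orbit closure (this is the operad formalism of the appendix)---so that flatness and the fibre identification reduce to $n=4$ together with the easy ``interior'' factors. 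For $n=4$ this is done directly: writing the cross-ratio of $(x_1,x_2,x_3,x_4)\in(\PP^1)^4$ in homogeneous coordinates as a ratio of products of the $2\times 2$ determinants $\langle x_i,x_j\rangle$, the locus where it equals the coordinate $[s:t]$ on $\ov M_{0,4}=\PP^1$ is the divisor of the section $t\langle x_1,x_3\rangle\langle x_2,x_4\rangle-s\langle x_1,x_4\rangle\langle x_2,x_3\rangle$ of $\O(1,1,1,1)\boxtimes\O(1)$ on $(\PP^1)^4\times\ov M_{0,4}$, which is nonzero on every fibre over $\ov M_{0,4}$; this divisor is $\ov Z$ when $n=4$, hence a flat family of divisors, and using the Grassmann--Pl\"ucker relation $\langle x_1,x_2\rangle\langle x_3,x_4\rangle-\langle x_1,x_3\rangle\langle x_2,x_4\rangle+\langle x_1,x_4\rangle\langle x_2,x_3\rangle=0$ one reads off that its three special fibres are the squarefree unions $\Delta_{12}\cup\Delta_{34}$, $\Delta_{13}\cup\Delta_{24}$, $\Delta_{14}\cup\Delta_{23}$ of diagonals, one per boundary point of $\ov M_{0,4}$, all with the Hilbert polynomial of the generic fibre since they are sections of $\O(1,1,1,1)$.

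The main obstacle is the scheme-theoretic control of the boundary fibres of $\ov Z$: one must show that the flat limit of the orbit closures along a smoothing of $C_0$ is exactly the reduced union $\mathcal Z(C_0,\vec p)$---with no embedded components along the diagonals where the constituent orbit closures meet, and no excess four-dimensional component---and one must match its irreducible components with the orbit closures of the component configurations of $C_0$. Reducedness and equidimensionality I would obtain by transporting the product structure above, presenting the boundary limit as a product of the $n=4$ limits (reduced, being cut out by squarefree bidegree-$(1,1,1,1)$ forms) with the flat reduced family over the interior factors. The identification of components I would then get from a dimension count together with the $\PGL_2$-invariance of the cross-ratio functions $\lambda_I$ attached to $4$-element subsets $I\subseteq\{1,\ldots,n\}$: every component configuration of $C_0$ sits in the limit with full $3$-dimensional orbit, while conversely each of the finitely many $3$-dimensional irreducible components of the limit must be a single $\PGL_2$-orbit closure, since all the $\lambda_I$ are locally constant on it. Finally, every ingredient---the Segre and cross-ratio maps, the flatness criterion over an integral Noetherian base, and the boundary product structure of $\ov M_{0,n}$---is available over $\ZZ$, so the argument uses no ground field.
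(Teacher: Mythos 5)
You correctly identify the crux---scheme-theoretic control of the boundary fibres of $\ov{Z}$---but the device you propose for it does not exist. The claim that, because $\ov{M}_{0,n}$ is \'etale-locally a product of interior strata and node-smoothing parameters, the family $\ov{Z}$ ``respects this decomposition'' so that a boundary limit is ``a product of the $n=4$ limits with the flat reduced family over the interior factors'' is not meaningful as stated: the fibres of $\ov{Z}$ are subschemes of $X^n$, namely unions of $3$-dimensional orbit closures glued along $2$-dimensional diagonals, and they carry no product decomposition mirroring the product structure of the base. Consequently nothing in your argument excludes embedded components along those diagonals or excess components in the limit, which is exactly what must be excluded before the Hilbert polynomial of a fibre of the closure can be computed and Hartshorne III.9.9 invoked. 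Moreover, your inclusion--exclusion over the components of $C_0$ (the paper's Lemmas \ref{lem:ses} and \ref{lem:gluing}, leading to Proposition \ref{prop:Hilbertpolynomial}) only expresses the Hilbert polynomial of the candidate fibre $\mathcal{Z}(C_0,p_1,\ldots,p_n)$ in terms of generic orbit closures with fewer markings; to conclude constancy you still need the Hilbert polynomial of a generic orbit closure in $X^n$ for $n\ge 5$, which is neither a divisor nor a union of smaller pieces, and your proposal never computes it. The paper does this (Proposition \ref{prop:GenHilbpoly}) by an explicit one-parameter flat degeneration cut out by the $n=4$ forms $f_{\pi_I(x',t)}$, whose special fibre is a smaller generic orbit closure together with a diagonal $X^3$ meeting along an $X^2$; some such computation is the actual content behind your ``each node interpolates as for $n=4$'' heuristic and cannot be skipped.

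Two further points. The containment that every component configuration of $C_0$ occurs in the limit is asserted (``sits in the limit with full $3$-dimensional orbit'') but not proved; the paper imports it from Lemma 3.2 of \cite{Gi} and then upgrades containment to equality precisely by the Hilbert-polynomial comparison, packaged through a valuative extension criterion (Theorem \ref{thm:mapextension}) so that only flat limits along DVRs---not the a priori larger fibres of the scheme-theoretic closure---ever need to be identified; your route through the closure makes the identification strictly harder, since the fibre of the closure over a deep boundary point is the union of limits over all arcs and could a priori have excess dimension. Your cross-ratio observation that every $3$-dimensional irreducible component of a limit is an orbit closure is correct and usable, but it is purely set-theoretic and says nothing about multiplicities or embedded structure. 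The $n=4$ pencil $t\langle x_1,x_3\rangle\langle x_2,x_4\rangle-s\langle x_1,x_4\rangle\langle x_2,x_3\rangle$ and its three degenerate members are fine and coincide with the paper's \S\ref{section:crossratio}, but for $n\ge 5$ the proposal as written has a genuine gap at its central step.
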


To prove this, we first construct a flat degeneration that allows for an inductive computation of the Hilbert polynomial of the fiber $\ov{Z}|_{(C,p_1,\ldots,p_n)} = \mathcal{Z}(C,p_1,\ldots,p_n)$ when $C$ is smooth.  This yields flatness over $M_{0,n}$.  We then prove a valuative criterion for a flat family to extend to a compactification and show that this criterion is satisfied for the subschemes $\mathcal{Z}(C,p_1,\ldots,p_n)$ over $\ov{M}_{0,n}$.

\subsection{Hilbert quotient}\label{intsec:Hilbert quotient}

For an algebraic group $G$ acting on a projective variety $X$, there is a Zariski-dense $G$-invariant open subset $U\subseteq X$ such that the orbit closures $\ov{Gu}, u\in U$, form a flat family over the quotient $U/G$ inducing an embedding \[U/G \into \Hilb(X).\]  By definition, the Hilbert quotient is the closure of the image: \[X\HilbQ G := \ov{U/G} \subseteq \Hilb(X).\]  This is independent of the choice of $G$-invariant open subset $U$ \cite[Remark 0.1.8]{K1}.

For our situation, in which $\PGL_2$ acts diagonally on $(\PP^1)^n$, a generic locus $U$ is the set of configurations of $n$ distinct points on the line and its quotient is $M_{0,n}$.   Thus \begin{equation}\label{eq:HilbIncl} M_{0,n} \into \Hilb((\PP^1)^n)\end{equation} and the Hilbert quotient $(\PP^1)^n\HilbQ\PGL_2$ is the closure of $M_{0,n}$ in this Hilbert scheme.

The map in (\ref{eq:HilbIncl}) is induced by the restriction of the flat family $\ov{Z}$ from Theorem \ref{thm:Zflat} to $M_{0,n}$.  Indeed, for $(C,p_1,\ldots,p_n)\in M_{0,n}$ there is only one irreducible component, namely $C\cong \PP^1$, so $\mathcal{Z}(C,p_1,\ldots,p_n)$ is simply the orbit closure of $(p_1,\ldots,p_n)\in (\PP^1)^n$.  Thus $\ov{Z}$ induces a morphism extending (\ref{eq:HilbIncl}) to $\ov{M}_{0,n}$.  Now $\ov{M}_{0,n}$ is proper, so its image under this morphism coincides with the closure of $M_{0,n}$ inside this Hilbert scheme.  Therefore, $\ov{Z}$ yields a map with image equal to the Hilbert quotient $(\PP^1)^n\HilbQ\PGL_2 \subseteq \Hilb((\PP^1)^n)$.

\begin{thm}\label{thm:Ziso}
The map $\ov{Z} : \ov{M}_{0,n} \rightarrow (\PP^1)^n\HilbQ\PGL_2$ is an isomorphism over any algebraically closed field, regardless of characteristic.
\end{thm} 

To prove this, we reduce to the case $n=4$ where the isomorphism can be described quite explicitly.  This gives an independent proof of Kapranov's theorem.

\subsection{Reduction to $n=4$}\label{intsec:Reduction to}

The following two results imply that the map extending (\ref{eq:HilbIncl}) is an isomorphism for each $n$ once we know that it is an isomorphism for $n=4$:

\begin{thm}\label{thm:prodstab} For any $4\le m \le n$ the product of the stabilization maps $s_I : \ov{M}_{0,n} \rightarrow \ov{M}_{0,I}$ over all $I$ of cardinality $m$ is an embedding \be \ov{M}_{0,n} & \into & \prod_{|I|=m} \ov{M}_{0,I}. \ee
\end{thm}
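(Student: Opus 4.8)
The plan is to check that the product map is injective on points and injective on tangent spaces, since $\ov M_{0,n}$ is proper and the target is separated, so a universally injective unramified morphism of finite type between such schemes is a locally closed immersion; properness of the source then makes it a closed immersion, hence an embedding. I would first recall that $s_I$ forgets the points indexed by the complement of $I$ and stabilizes the resulting curve, so a point of $\ov M_{0,n}$ is an $n$-marked stable genus-$0$ curve $(C;p_1,\dots,p_n)$ and its image records, for each $m$-subset $I$, the combinatorial type and cross-ratios of the $m$ chosen points. The key observation is that the full data of $(C;p_1,\dots,p_n)$ is determined by all of its $m$-point restrictions once $m\ge 4$: on each irreducible component of $C$ the position of the marked and nodal points (up to $\PGL_2$) is pinned down by cross-ratios of four of them, and the dual-tree combinatorics is reconstructed from which quadruples of indices lie on a common component in various limiting configurations.

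Concretely, for injectivity on points I would argue as follows. Given two stable curves with the same image, first show the dual trees agree: a partition of $\{1,\dots,n\}$ into two parts of size $\ge 2$ corresponds to a node of $C$, and whether such a partition is "realized" can be detected by restricting to a suitable $4$-element (or $m$-element) subset meeting both parts in at least two indices — here the hypothesis $m\ge 4$ is exactly what is needed so that every essential partition is visible in some $\ov M_{0,I}$. This identifies the two curves' combinatorial types. Then on each component, having fixed three reference points among the marked/nodal points, every other marked point's location is a cross-ratio with those three, which is read off from the image in the appropriate $\ov M_{0,I}$ (choosing $I$ to contain the three references and the point in question, padding with more indices if $m>4$ and using the stabilization compatibility to see the padding doesn't interfere). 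Hence the isomorphism class of $(C;p_1,\dots,p_n)$ is recovered, giving injectivity on closed points, and the same argument over an arbitrary base (or over dual numbers) handles scheme-theoretic points and tangent vectors: a tangent vector at $(C;\vec p)$ is a first-order deformation, which is determined by its effect on the smoothing parameters of the nodes and on the cross-ratios of points on each component, and each of these is detected by some $s_I$.

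The main obstacle I anticipate is the bookkeeping around the stabilization maps when $m>4$: $s_I$ not only forgets points but also contracts components that become unstable, so one must be careful that the cross-ratios and node-partitions one wants to detect actually survive in $\ov M_{0,I}$ rather than being destroyed by contraction. The clean way to handle this is to first dispose of the case $m=4$ — where $\ov M_{0,I}\cong\PP^1$ and everything is an explicit cross-ratio computation — and then deduce $4\le m\le n$ from it, either by factoring $\prod_{|I|=m}s_I$ through $\prod_{|J|=4}s_J$ via the obvious forgetful-map compatibilities (an embedding composed with a map whose composite is an embedding forces the first to be an embedding on the relevant locus), or simply by observing that the $m=4$ statement already gives injectivity and the tangent-space statement, and the general $m$ map is "at least as injective." I would therefore organize the proof to prove $m=4$ carefully and then remark that larger $m$ follows formally.
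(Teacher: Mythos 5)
Your overall strategy coincides with the paper's: reduce to $m=4$ (each $s_J$ with $|J|=4$ factors through some $s_I$ with $|I|=m$, so the $m$-product separates whatever the $4$-product does), then show the product map separates points and tangent vectors and conclude from properness. The genuine gap is exactly at the step you flag as the ``main obstacle'' and then set aside: contraction under stabilization is not a bookkeeping nuisance special to $m>4$ that disappears once one ``disposes of $m=4$ explicitly'' --- it is the crux of the $m=4$ case itself, since forgetting all but four markings contracts most components of an $n$-pointed stable curve. Your tangent-space step merely asserts that each node-smoothing parameter ``is detected by some $s_I$.'' For $x\in D_{K,L}$ with node $q$ joining components $C_K$ and $C_L$, every $s_{K'+L'}$ with two-element $K'\subseteq K$, $L'\subseteq L$ is constant on $D_{K,L}$, so its differential at $x$ factors through the normal line $N_x\cong T_qC_K\otimes T_qC_L$, and it vanishes there whenever the stabilization contracts $C_K$ or $C_L$; one must exhibit a choice of $I$ for which neither is contracted. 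The paper's proof supplies precisely this: take the two special points of $C_K$ other than $q$, replacing any that is a node by a marking separated from $C_K$ by that node, and similarly on the $L$-side; then $s_I$ contracts neither component and $ds_I(x)$ is nonzero on $N_x$. Combined with injectivity of $d\varphi(x)$ on $T_xD_{K,L}$ --- which the paper gets by induction on $n$ using the compatibility of stabilization with the boundary inclusion $D_{K,L}\cong\ov{M}_{0,K+1}\times\ov{M}_{0,L+1}$ --- this yields the tangent statement. Your cross-ratio reconstruction of points has the same lacuna wherever the three reference points on a component are nodes rather than markings: one must substitute markings lying beyond those nodes and check that the relevant cross-ratio survives the contraction, which is the same argument.

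Two smaller imprecisions. First, whether a partition $[n]=K+L$ is realized by a node cannot be read off from ``a suitable $4$-element subset'': a nodal value of $s_{K'+L'}$ for one choice of $K'\subseteq K$, $L'\subseteq L$ only certifies a node separating $K'$ from $L'$. What makes the quadruple data determine the dual tree is the statement that a node separates $K$ from $L$ if and only if for \emph{every} pair of two-element subsets $K'\subseteq K$, $L'\subseteq L$ some node separates $K'$ from $L'$; the paper isolates this as Lemma~\ref{lem:nodes} and proves it by induction on the number of components, and your argument needs it (or an equivalent) as well. Second, your reduction is stated backwards: it is $\prod_{|J|=4}s_J$ that factors through $\prod_{|I|=m}s_I$ (via the stabilizations $\ov{M}_{0,I}\to\ov{M}_{0,J}$ for $J\subseteq I$), which is the correct direction for deducing the general $m$ from $m=4$; your parenthetical shows you intend the right implication, but as written the factorization does not exist.
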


\begin{prop}\label{prop:det} There is a commutative diagram
\[\label{diag:det}\xymatrix{\ov{M}_{0,n} \ar[r] \ar[d] & (\PP^1)^n\HilbQ \PGL_2 \ar[d] \\ \prod_{|I|=4} \ov{M}_{0,4} \ar[r] & \prod_{|I|=4} (\PP^1)^4\HilbQ \PGL_2}\] where the top arrow is induced by $\ov{Z}$ and the bottom arrow is the product of the analogous maps for $n=4$.
\end{prop}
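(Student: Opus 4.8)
The plan is to verify commutativity of the square by chasing an arbitrary point $(C,p_1,\ldots,p_n)\in\ov M_{0,n}$ around both ways and checking that the two resulting subschemes of $\prod_{|I|=4}(\PP^1)^4$ agree. Going down-then-right: the stabilization map sends $(C,p_1,\ldots,p_n)$ to the tuple $\bigl((s_I C, (p_i)_{i\in I})\bigr)_{|I|=4}$, and then the product of the $n=4$ Hilbert-quotient maps sends this to the tuple of orbit closures $\bigl(\mathcal Z(s_I C,(p_i)_{i\in I})\bigr)_{|I|=4}\subseteq\prod_{|I|=4}(\PP^1)^4$. Going right-then-down: the map $\ov Z$ sends $(C,p_1,\ldots,p_n)$ to $\mathcal Z(C,p_1,\ldots,p_n)\subseteq(\PP^1)^n$, and the right vertical arrow is the product over $|I|=4$ of the projections $\Hilb((\PP^1)^n)\to\Hilb((\PP^1)^I)$ induced by the coordinate projections $\pi_I\colon(\PP^1)^n\to(\PP^1)^I$. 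So the content of the proposition is the identity
\[
\pi_I\bigl(\mathcal Z(C,p_1,\ldots,p_n)\bigr) \;=\; \mathcal Z\bigl(s_I C,(p_i)_{i\in I}\bigr)
\quad\text{in }(\PP^1)^I,
\]
scheme-theoretically (both reduced), for every $I$ with $|I|=4$, together with the remark that the right vertical arrow is a well-defined morphism of Hilbert schemes — which follows because projection along a smooth projective factor is flat, so it carries flat families to flat families and hence defines a morphism $\Hilb((\PP^1)^n)\to\Hilb((\PP^1)^I)$, applied to the flat family $\ov Z$ of Theorem \ref{thm:Zflat}.

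First I would record how the component configurations transform under stabilization. For an irreducible component $D\subseteq C$, contracting $C$ onto $D$ and then projecting to the $I$-coordinates gives the same configuration on $D\cong\PP^1$ as first stabilizing to $s_IC$ and then retracting onto the image $\ov D$ of $D$ (when $\ov D$ is still a component) or onto the appropriate component of $s_IC$ (when $D$ gets contracted). In other words, $\pi_I$ applied to the component configuration of $D$ in $C$ is the component configuration of the corresponding component of $s_IC$. Since $\pi_I$ is $\PGL_2$-equivariant for the diagonal actions, $\pi_I$ carries the $\PGL_2$-orbit closure of a component configuration of $C$ onto the $\PGL_2$-orbit closure of the corresponding component configuration of $s_IC$; here one uses that the image of a $\PGL_2$-orbit closure under an equivariant morphism is again a $\PGL_2$-orbit closure (the orbit maps onto the orbit, and taking closures commutes with the proper pushforward $\pi_I$). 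Taking the union over all components $D\subseteq C$ and using that $\pi_I$ of a finite union of closed subschemes is the union of their images then yields the displayed set-theoretic equality; since both sides are given the reduced structure, it is a scheme-theoretic equality as well.

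The main subtlety — and the step I expect to require the most care — is the bookkeeping of components under stabilization, in particular when a component $D$ of $C$ becomes contracted in $s_IC$ so that its component configuration degenerates to a configuration supported at a single point of some other component of $s_IC$. In that case the $\PGL_2$-orbit closure of $\pi_I$ of $D$'s configuration is all of $(\PP^1)^I$ if at least two of the projected points are distinct, but is a single point (hence already contained in, indeed equal to a point appearing as the image of some non-contracted component's orbit closure) if all projected points coincide; one must check that in every case the image is already contained in $\mathcal Z(s_IC,(p_i)_{i\in I})$, so no spurious components are created and none are lost. Here the combinatorics of stable trees is exactly what guarantees this: a component of $s_IC$ arises from contracting a connected subtree of $C$, and among the components of that subtree there is one whose $I$-marked-point configuration is nondegenerate and maps onto the configuration of the resulting component of $s_IC$. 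Once this dictionary is in place the equality is immediate, and commutativity of the square follows. I would organize the argument so that the case $|I|=m$ for general $m$ with $4\le m\le n$ is handled uniformly, since that is what Theorem \ref{thm:prodstab} uses, though only $m=4$ is needed for the reduction to the base case $n=4$.
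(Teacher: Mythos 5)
The gap is at your very first step: the right-hand vertical arrow does not exist for the reason you give, and constructing it is the entire content of the paper's proof. There is no morphism $\Hilb((\PP^1)^n)\to\Hilb((\PP^1)^I)$ induced by the coordinate projection $\pi_I$. Flatness of $\pi_I$ lets you pull flat families \emph{back}, not push them forward: the scheme-theoretic image of a flat family of closed subschemes under $\pi_I\times\mathrm{id}$ is in general neither flat over the base nor compatible with base change. (Already for two points of $\PP^1\times\PP^1$ that stay distinct while their first coordinates collide in a special fibre, the image family in $\PP^1$ has length $2$ generically and length $1$ at the special point.) So even after restricting to $(\PP^1)^n\HilbQ\PGL_2$ one must actually construct the vertical morphism, not merely describe it on closed points; your fibrewise identity $\pi_I(\mathcal{Z}(C,p_1,\ldots,p_n))=\mathcal{Z}(s_IC,(p_i)_{i\in I})$ is correct (and the paper uses it for smooth $C$, via Lemma \ref{lem:boundary}), but it does not by itself produce a morphism of schemes, nor does it make the square commute as a diagram of morphisms.

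The paper's proof is precisely the construction of this arrow. Since for $|I|=4$ the relevant Hilbert scheme is $\PP\H^0((\PP^1)^4,\O(1,1,1,1))$, one takes the universal family $Y$ over $(\PP^1)^n\HilbQ\PGL_2$, pushes forward along $\pi=(\pi_I,\mathrm{id})$, checks that $R\pi_*\O_Y$ is a perfect complex (flatness of $Y$ over the Hilbert quotient together with smoothness and properness of $\pi$), and applies the Knudsen--Mumford determinant to $\O\to R\pi_*\O_Y$; this yields a line bundle with a canonical section $t$ whose restriction $t_x$ to each fibre is a nonzero $(1,1,1,1)$-form (nonzero because $\pi(Y_x)$ is always a divisor of the generic Chow class), equal to $f_{s_I(C,p_1,\ldots,p_n)}$ when $C$ is smooth. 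That gives an honest morphism $(\PP^1)^n\HilbQ\PGL_2\to\PP\H^0((\PP^1)^4,\O(1,1,1,1))$, and commutativity then follows because the square commutes on a dense open subset and everything is separated. Note also that this mechanism is special to $|I|=4$; the paper explicitly points out that Hilbert-scheme functoriality under projections fails (in contrast with Koll\'ar's proper push-forward on Chow varieties), so your plan to handle all $4\le m\le n$ uniformly cannot work on the Hilbert side.
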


To see that these results imply such a reduction, note that if the bottom arrow is an isomorphism then by Theorem \ref{thm:prodstab} the composition $\ov{M}_{0,n} \rightarrow \prod_{I} (\PP^1)^4\HilbQ \PGL_2$ is an isomorphism onto its image, hence the top surjection is an isomorphism.

We prove Theorem \ref{thm:prodstab} inductively by showing that the map in question separates points and tangent vectors due to a compatibility between the stabilization morphisms and the boundary.  For Proposition \ref{prop:det} we note that the $n=4$ Hilbert quotient lives inside \[\PP \H^0((\PP^1)^4,\mathcal{O}(1,1,1,1))\] and we apply the `det' construction of Knudsen-Mumford  \cite{KM} to induce a morphism from $(\PP^1)^n\HilbQ \PGL_2$ to this space of sections.

\subsection{Chow quotient}\label{intsec:Chow quotient}

When $G$ acts on $X$ as in the definition of the Hilbert quotient, flatness of the generic orbit closures implies that all the cycles $\ov{Gu}$, $u\in U$, have the same dimension and homology class.  There is an embedding \[U/G \into \Chow(X)\] into the Chow variety parameterizing effective algebraic cycles on $X$ with this homology class.  The Chow quotient is then defined as the closure of the image: \[X\ChowQ G := \ov{U/G} \subseteq \Chow(X).\]  In particular, there is an embedding \begin{equation}\label{eq:ChowIncl}M_{0,n} \into \Chow((\PP^1)^n)\end{equation} and the Chow quotient $(\PP^1)^n\ChowQ\PGL_2$ is the closure in this Chow variety.

There are several treatments of the Chow variety which, although in characteristic zero essentially coincide, are not in general equivalent.  We rely on the definition of Koll\'ar \cite[\S I]{Ko}.  It is quite general, although it requires the Chow variety to be seminormal.  

The subscheme $\mathcal{Z}(C,p_1,\ldots,p_n)$ constructed in \S\ref{intsec:Flat family} can be viewed as an algebraic cycle: instead of taking the union of all component configuration orbit closures, we view these same orbit closures as effective cycles and take their sum.  Since the union is irredundant, this is the same as the fundamental cycle of the union.  This induces a morphism \[[\ov{Z}] : \ov{M}_{0,n} \rightarrow \Chow((\PP^1)^n)\] extending (\ref{eq:ChowIncl}) and with image $(\PP^1)^n\ChowQ \PGL_2$, as follows.  Since $\ov{M}_{0,n}$ is smooth, so in particular seminormal, the map $\ov{Z} : \ov{M}_{0,n} \rightarrow \Hilb((\PP^1)^n)$ factors through the seminormalization of this Hilbert scheme. By \cite[Theorem 6.3]{Ko} there is a Hilbert-Chow morphism $\Hilb^{sn}(X) \rightarrow \Chow(X)$ for any $X$.  Thus $\ov{Z}$, viewed as a family of cycles, induces the composition $\ov{M}_{0,n} \rightarrow \Hilb^{sn}((\PP^1)^n) \rightarrow \Chow((\PP^1)^n)$.

\begin{thm}\label{thm:ZChiso}
The map $[\ov{Z}] : \ov{M}_{0,n} \rightarrow (\PP^1)^n\ChowQ\PGL_2$ is an isomorphism.
\end{thm}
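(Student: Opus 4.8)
The plan is to leverage Theorem \ref{thm:Ziso}. As observed in \S\ref{intsec:Chow quotient}, $[\ov{Z}]$ factors as
\[
\ov{M}_{0,n}\ \xrightarrow{\ \ov{Z}\ }\ (\PP^1)^n\HilbQ\PGL_2\ \xrightarrow{\ \mathrm{HC}\ }\ (\PP^1)^n\ChowQ\PGL_2 ,
\]
where $\ov{Z}$ is the isomorphism of Theorem \ref{thm:Ziso} and $\mathrm{HC}$ is the restriction to the Hilbert quotient of the Hilbert--Chow morphism $\Hilb^{sn}((\PP^1)^n)\to\Chow((\PP^1)^n)$ of \cite{Ko}; this restriction makes sense because the Hilbert quotient, being isomorphic to $\ov{M}_{0,n}$, is smooth and hence seminormal, so lies in $\Hilb^{sn}((\PP^1)^n)$, and the image of the composite is the Chow quotient. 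Since $\ov{Z}$ is an isomorphism it is enough to show $\mathrm{HC}$ is an isomorphism. Its surjectivity is already known, so there are two things to do: show $\mathrm{HC}$ is injective, and show that a finite bijection of the resulting kind onto a seminormal scheme is an isomorphism.

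For injectivity I would argue that the fundamental cycle of $\mathcal{Z}(C,p_1,\ldots,p_n)$ determines the subscheme. Each irreducible component of $\mathcal{Z}(C,p_1,\ldots,p_n)$ is the $\PGL_2$-orbit closure of the component configuration of an irreducible component $D\subseteq C$; the number of distinct points of that configuration equals the number of special points of $D$, which is at least three by stability, so the orbit is free and the orbit closure is three-dimensional. Hence $\mathcal{Z}(C,p_1,\ldots,p_n)$ is reduced --- by construction --- and pure of dimension three, so its fundamental cycle is the irredundant sum, over the irreducible components of $C$, of these orbit closures, each appearing with multiplicity one; in particular the reduced structure on the support of this cycle recovers $\mathcal{Z}(C,p_1,\ldots,p_n)$. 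Since every point of the Hilbert quotient is $\ov{Z}(C,p_1,\ldots,p_n)$ for a unique stable curve and therefore parametrizes precisely this subscheme, distinct points of the Hilbert quotient carry distinct cycles, and $\mathrm{HC}$ is injective. The same bookkeeping matches up residue fields: a point of the Chow quotient carries a cycle defined over its residue field, so the reduced support of that cycle --- and hence the associated subscheme --- is defined over that residue field as well, whence the point of the Hilbert quotient above it has the same residue field.

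To conclude, observe that $\mathrm{HC}$ is proper, since $\ov{M}_{0,n}$ is; quasi-finite, being injective and of finite type; hence finite. Moreover its source $\ov{M}_{0,n}$ is reduced, it is an isomorphism over $M_{0,n}$ and so birational, and by the previous paragraph it induces isomorphisms on all residue fields. On the other hand $\Chow((\PP^1)^n)$, and with it the Chow quotient, is seminormal by Koll\'ar's construction \cite{Ko}. A finite bijective morphism from a reduced scheme onto a seminormal scheme that induces isomorphisms on residue fields is an isomorphism; hence $\mathrm{HC}$, and therefore $[\ov{Z}]=\mathrm{HC}\circ\ov{Z}$, is an isomorphism.

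The heart of the matter is the interplay used for injectivity: once one knows the union of orbit closures is irredundant and that all these orbit closures have the same (three-dimensional) fundamental class, the fundamental cycle of $\mathcal{Z}(C,p_1,\ldots,p_n)$ is reduced with every multiplicity one, and so may be traded back and forth with the reduced subscheme. The one genuinely arithmetic point, and the main thing to be careful about when working over $\ZZ$ rather than over a field, is that this trade is compatible with extending the residue field --- equivalently, that the subschemes $\mathcal{Z}(C,p_1,\ldots,p_n)$ are geometrically reduced --- which holds because the component configurations are defined over the residue field, the marked points being rational. Over a field, in particular in Kapranov's original characteristic-zero setting, this last point is automatic.
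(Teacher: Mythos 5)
Your strategy (deduce the Chow statement from Theorem \ref{thm:Ziso} by showing the restricted Hilbert--Chow morphism is an isomorphism) is reasonable in outline, and your injectivity argument via reducedness of the cycles is fine in spirit, but the final step has a genuine gap: you need the \emph{Chow quotient} itself to be seminormal, and that is not known at this point. Koll\'ar's construction makes the ambient variety $\Chow((\PP^1)^n)$ seminormal, but seminormality is not inherited by closed subvarieties, and $(\PP^1)^n\ChowQ\PGL_2$ is by definition the (reduced) closure of $M_{0,n}$ inside $\Chow((\PP^1)^n)$; its seminormality is essentially equivalent to the theorem you are trying to prove. Without it, ``finite $+$ bijective $+$ birational $+$ isomorphisms on residue fields'' does not give an isomorphism onto the image: the normalization $\PP^1\to C\subseteq\PP^2$ of a cuspidal cubic has all these properties, with smooth (hence seminormal) ambient target, yet is not an isomorphism onto $C$. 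So after establishing bijectivity you must still separate tangent vectors (or, equivalently, control the scheme structure of the image), and your proposal supplies no mechanism for that. A secondary caveat: the dictionary between $k$-points of Koll\'ar's $\Chow$ and cycles defined over $k$ is delicate over imperfect fields, so the residue-field bookkeeping in your injectivity step would need more care in the $\ZZ$-setting you emphasize.

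This is precisely the pitfall the paper flags in the remark following the proof sketch in \S\ref{intsec:The case}: routing the comparison through the Hilbert--Chow morphism in Koll\'ar's framework only controls things up to seminormalization, which is why the paper does not argue as you do. Instead it uses that Koll\'ar's Chow functor admits proper push-forward, so the projections $(\PP^1)^n\to(\PP^1)^4$ induce $\Chow((\PP^1)^n)\to\Chow((\PP^1)^4)$; combining the resulting commutative diagram with Lemma \ref{lem:n=4} (where $\Chow=\Hilb=\PP^{15}$ and the $n=4$ map is an explicit linear embedding) and Theorem \ref{thm:prodstab} (which separates both points \emph{and} tangent vectors), the composite $\ov{M}_{0,n}\to\prod_I\Chow((\PP^1)^4)$ is an embedding, forcing the surjection $\ov{M}_{0,n}\to(\PP^1)^n\ChowQ\PGL_2$ to be an isomorphism with no seminormality hypothesis on the target. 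If you want to salvage your route, you would have to prove that the Hilbert--Chow morphism is an immersion on the Hilbert quotient (e.g.\ by the same projection trick), at which point you have essentially reproduced the paper's argument.
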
 

This also reduces to the case $n=4$, as explained below.

\subsection{The case $n=4$}\label{intsec:The case}

For $(\PP^1)^4$ each 3-dimensional orbit closure is a hypersurface cut out by a multi-homogeneous form, so the relevant components of the Hilbert scheme and Chow variety coincide: \[\Hilb((\PP^1)^4) = \Chow((\PP^1)^4) = \PP\H^0((\PP^1)^4,\O(1,1,1,1)) = \PP^{15}.\]  

\begin{lem}\label{lem:n=4}
The map $M_{0,4} \into \PP\H^0((\PP^1)^4,\O(1,1,1,1))$ sending a configuration of four distinct points on the line to the form cutting out its orbit closure extends to a map from $\ov{M}_{0,4}$ which is an isomorphism onto its image: \[\ov{M}_{0,4}~\widetilde{\rightarrow}~(\PP^1)^4\HilbQ\PGL_2 = (\PP^1)^4\ChowQ\PGL_2 \into \PP\H^0((\PP^1)^4,\O(1,1,1,1)).\]
\end{lem}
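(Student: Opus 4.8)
The plan is to make everything completely explicit on $(\PP^1)^4$ with coordinates $([x_0:x_1],[y_0:y_1],[z_0:z_1],[w_0:w_1])$. First I would write down, for a configuration of four distinct points, the unique (up to scalar) multihomogeneous form of degree $(1,1,1,1)$ vanishing on its $\PGL_2$-orbit closure. The orbit closure of $(p_1,p_2,p_3,p_4)$ is the locus of $4$-tuples having the same cross-ratio; since the cross-ratio is a ratio of two $(1,1,1,1)$ expressions (products of $2\times 2$ determinants $[x\,y]:=x_0y_1-x_1y_0$ in the four coordinate pairs), the defining form is an explicit pencil combination $\lambda\,[xy][zw]+\mu\,[xz][yw]$ (using the Plücker-type relation $[xy][zw]-[xz][yw]+[xw][yz]=0$ to see that the span of the three "degenerate" forms is two-dimensional). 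Thus the rational map $M_{0,4}\dashrightarrow \PP\H^0(\O(1,1,1,1))$ lands in the $\PP^1$ of such pencil combinations, and on $M_{0,4}\cong\PP^1\smallsetminus\{0,1,\infty\}$ it is literally $\lambda_0\mapsto$ cross-ratio, which is an isomorphism onto its image; I would record the image as this embedded $\PP^1\subseteq\PP^{15}$.

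Next I would identify the three boundary points of $\ov M_{0,4}$ with the three "degenerate" forms $[xy][zw]$, $[xz][yw]$, $[xw][yz]$, and check that the map extends across them. Concretely, over a DVR approach (or just by taking limits of the pencil combination as the cross-ratio tends to $0,1,\infty$) the form degenerates to one of these three products; the associated subscheme is the union of two orbit closures of the boundary curve (a chain of two $\PP^1$'s), matching exactly the description of $\mathcal{Z}$ in \S\ref{intsec:Flat family}. Hence the extended map $\ov M_{0,4}\to\PP^{15}$ agrees with $\ov Z$ (up to the identification $\Hilb=\PP^{15}$), so by Theorem \ref{thm:Zflat} it is a morphism and its image is the Hilbert quotient $(\PP^1)^4\HilbQ\PGL_2$; the identification $\Hilb((\PP^1)^4)=\Chow((\PP^1)^4)=\PP^{15}$ is the statement already noted that each $3$-dimensional orbit closure is a single $(1,1,1,1)$-hypersurface, so no component of the Hilbert scheme of higher-degree or lower-dimensional flavor intervenes.

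Finally I would verify that the extended map $\ov M_{0,4}\to\PP^{15}$ is an isomorphism onto its image. Both source and target (the image) are smooth projective curves — the source is $\ov M_{0,4}\cong\PP^1$, and the image is the closure of the embedded pencil $\PP^1$, which is this same $\PP^1$ linearly embedded — so it suffices to check the map is injective (separates the three boundary points from each other and from the interior, which is clear since the three degenerate forms are distinct points of $\PP^{15}$) and unramified. Injectivity plus the fact that a bijective morphism of smooth curves in characteristic-free generality is an isomorphism when it is also unramified (equivalently, an immersion) finishes it; unramifiedness at the interior is immediate from cross-ratio being a coordinate, and at the boundary one checks the derivative of $\lambda_0\mapsto[\lambda_0:\text{cross-ratio pencil}]$ is nonzero in the local chart near each degenerate form.

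The main obstacle is the boundary analysis: one must confirm not merely that the pencil of forms extends to the three degenerate products, but that the flat limit of the subschemes $V(\lambda[xy][zw]+\mu[xz][yw])$ as the parameter specializes is reduced and equals $\mathcal{Z}$ of the corresponding two-component stable curve — i.e. that no embedded points appear — so that the map built here genuinely coincides with $\ov Z$ and lands in the Hilbert quotient rather than some other component; this is where working over $\ZZ$ requires a little care, but it reduces to the observation that $[xy][zw]$ is a reduced form (a product of two distinct smooth hypersurfaces meeting transversally) over any base.
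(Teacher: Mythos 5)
Your proposal is correct and follows essentially the same route as the paper: the paper also writes the defining $(1,1,1,1)$-form explicitly (its $f_{(0,1,\infty,t)}$ is exactly your pencil $\lambda\,[xy][zw]+\mu\,[xz][yw]$ in affine coordinates), observes it depends linearly and nonconstantly on the cross-ratio parameter $t$, and concludes at once that the extension $\ov{M}_{0,4}=\PP^1\to\PP^{15}$ is a linear embedding whose image is the Hilbert$=$Chow quotient. Your detour through $\ov{Z}$, Theorem \ref{thm:Zflat}, and the reducedness of the flat limits is superfluous (though not circular) for this lemma: since $\ov{M}_{0,4}$ is proper and the map extends the inclusion of $M_{0,4}$, its image is automatically the closure of $M_{0,4}$ in $\PP\H^0(X^4,\O(1,1,1,1))$, i.e.\ the Hilbert/Chow quotient, and the isomorphism claim already follows from linearity without the separate injective-plus-unramified check.
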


The classical cross-ratio can be used to explicitly describe these forms for generic orbit closures.  It is then evident from their description that they also describe orbit closures for configurations with at most two coincident points.  Since $\ov{M}_{0,4}$ allows at most two points to come together, the proof of this lemma follows in a straightforward manner.

\noindent \emph{Proof of Theorem \ref{thm:ZChiso}}.  Koll\'ar's definition of the Chow variety is covariant with respect to proper maps \cite[Theorem 6.8]{Ko}, so the natural projections $(\PP^1)^n \rightarrow (\PP^1)^4$ induce, via push-forward of cycles, morphisms $\Chow((\PP^1)^n) \rightarrow \Chow((\PP^1)^4)$.  This obviates the need for an analogue of Proposition \ref{prop:det} as it yields the following commutative diagram:
\[\label{chowdiag}\xymatrix{\ov{M}_{0,n} \ar@{->>}[r] \ar[d] & (\PP^1)^n\ChowQ \PGL_2 \ar@{^{(}->}[r] \ar[d] & \Chow((\PP^1)^n) \ar[d] \\ \prod_I \ov{M}_{0,4} \ar@{->>}[r] & \prod_I (\PP^1)^4\ChowQ \PGL_2 \ar@{^{(}->}[r] & \prod_I\Chow((\PP^1)^4)}\] 
The argument that Theorem \ref{thm:ZChiso} holds for all $n$ since it holds for $n=4$ is now exactly the same as in \S\ref{intsec:Hilbert quotient} for the Hilbert quotient. \hfill $\Box$

\begin{remark}
For any theory of Chow varieties which admits proper push-forward morphisms \emph{and} a Hilbert-Chow morphism this argument yields an isomorphism to the Hilbert quotient based on the $n=4$ isomorphism.  Since we rely on Koll\'ar's treatment of the Chow variety this only proves an isomorphism of $\ov{M}_{0,n}$ with its image in the seminormalization of the Hilbert scheme.  This is why we instead rely on Proposition \ref{prop:det}.
\end{remark}

\subsection{Operads}
It is curious that once the family $\ov{Z} \rightarrow \ov{M}_{0,n}$ is constructed, the proof that it induces isomorphisms to both the Hilbert quotient and Chow quotient reduces to the case $n=4$, and our main tool for this reduction, Theorem \ref{thm:prodstab}, is proven using a compatibility with the $n=4$ case.  In an appendix we sketch a categorical framework underlying this phenomenon, using ideas from the theory of cyclic operads introduced by Getzler and Kapranov \cite{GK}.

\subsection{Background/motivation}

Kapranov's original proof is accomplished by constructing an impressive series of isomorphisms.  First, he shows that the Gelfand-MacPherson correspondence extends to Chow quotients \cite[Theorem 2.2.4]{K1}: \[(\PP^1)^n \ChowQ \PGL_2 \cong \Gr(2,n) \ChowQ \GG_m^n.\] Next, he shows that $\Gr(2,n) \ChowQ \GG_m^n$ is isomorphic to the closure in $\Chow(\PP^{n-2})$ of the locus of rational normal curves through $n$ fixed, generic points in $\PP^{n-2}$ \cite[Corollary 4.1.9]{K1}.  Finally, he shows that this space of Veronese curves is isomorphic to $\ov{M}_{0,n}$ \cite[Theorem 0.1]{K2}.  The delayed appearance of $\ov{M}_{0,n}$ is perhaps because only this last isomorphism extends to an isomorphism between the universal families.  Kapranov derives the corresponding result about Hilbert quotients by proving that the relevant Hilbert-Chow morphism is an isomorphism.  Throughout, he relies on results of Barlet valid only over $\mathbb{C}$ to study these various Chow varieties and remarks that the case of positive characteristic is more subtle and not considered in his paper \cite[0.1.5]{K2}.  

Our original motivation for exploring this topic was to understand Kapranov's isomorphisms in as direct a way as possible, and to provide a simple geometric description of the universal family of $\PGL_2$ orbit closures.  The first author has applied ideas from this paper to study a family of Chow quotients that also turn out to be isomorphic to $\ov{M}_{0,n}$ but for which the Gelfand-MacPherson correspondence does not play a role.  See \cite{Gi}.  

\subsection*{Outline} We introduce basic notation and constructions in \S\ref{section:Notation}, then in \S\ref{Proof of prodstab} provide a proof of Theorem \ref{thm:prodstab}.  In \S\ref{section:orbitclosures} we study the structure of orbit closures in $(\PP^1)^n$.  Section \S\ref{section:crossratio} contains the proof of the $n=4$ case (Lemma \ref{lem:n=4}) and \S\ref{section:Hilbertpolynomial} uses ideas from it to compute various Hilbert polynomials.  In \S\ref{section:Proof of Zflat} we use these computations to prove that the family $\ov{Z}$ is flat (Theorem \ref{thm:Zflat}), which by the reduction described above completes the proof of Theorems \ref{thm:Ziso} and \ref{thm:ZChiso} based on Proposition \ref{prop:det}, which is proven in \S\ref{Proof of det}.  We conclude with an appendix on operads as an alternate formalism to prove certain results in this paper.

\subsection*{Acknowledgements} We thank Dan Abramovich, Matt DeLand, Jeffrey Giansiracusa, Joe Ross, and Jonathan Wise for helpful conversations, and the anonymous referee for a careful reading and useful suggestions.  N.G. was partially supported by funds from National Science Foundation (NSF) award DMS-0901278.  W.D.G. was supported by an NSF Postdoctoral Fellowship.

\section{Notation}\label{section:Notation} 

Throughout the paper let $X := \mathbb{P}^1$ and $G := \PGL_2$, so that $G$ acts diagonally on $X^n$.  

\subsection{Indexing sets}\label{section:Indexing sets} Let $K+L$ denote the disjoint union of sets $K$ and $L$, and write $K+1$ for the disjoint union of $K$ with the one-element set $\{*\}$.  For an inclusion of finite sets $I \into N$ let $\pi_I : X^N \to X^I$ be the obvious projection and if $I^c \neq \emptyset$ then let $i_I : X^{I+1} \into X^N$ be the closed embedding such that $i_I(x)_j$ is $x_j$ if $j\in I$ and $x_*$ otherwise.

\subsection{Diagonals} It is convenient to introduce the following notation related to diagonals: \be \Delta_i & := & \{ x \in X^n : x_j = x_k {\rm \; for \; all \; } j,k \ne i \} \\ \Delta_\bullet & := & \Delta_1 \cup \cdots \cup \Delta_n \\ \Delta_{\rm big} & := & \{ x \in X^n : {\rm \; there \; is \; } i\ne j {\rm \; with \;} x_i=x_j \} \\ U_n & := & X^n \setminus \Delta_{\rm big} \\ \Delta_{\rm small} & := & \{ x \in X^n : x_i=x_j {\rm \; for \; all \;} i,j \} \ee  

\subsection{Partition from a configuration} A point $x \in X^n$ determines a partition $P = (P_1, \dots, P_l)$ of $[n] := \{ 1, \dots, n \}$ with some ordering, e.g., lexicographical, that we call the \emph{type} of $x$.  By definition, $i$ and $j$ are in the same part if and only if $x_i = x_j$.  The type of $x$ depends only on its $G$-orbit.  The \emph{generic} type is the one with all distinct coordinates: $l=n$.  The cycle $\overline{Gx} \in Z_*(X^n)$ given by the closure of the orbit of $x$ depends only on the $G$-orbit of $x$ and we show in Proposition~\ref{prop:Chowclass} that its Chow class $[ \overline{Gx} ] \in A_*(X^n)$ depends only on the type of $x$.

A partition $P$ of $[n]$ determines a closed embedding $\Delta_P : X^l \into X^n$ characterized by $\Delta_P(y)_i = y_j \iff i \in P_j$.  The locus of points in $X^n$ of type $P$ is $\Delta_P(U_l)$, and \[\Delta_P := \Delta_P(X^l) = \overline{\Delta_P(U_l)} \subseteq X^n.\]  In particular, if $x$ has type $P$ then $x = \Delta_P(y)$ for $y \in U_l$ and $\overline{Gx} = \Delta_P( \overline{Gy})$ .  This allows us to reduce many questions about $\overline{Gx}$ to the case where $x$ has generic type.

\subsection{Component configurations}\label{section:Component configurations}
Let $k$ be a field and $(C,p_1,\dots,p_n) \in \ov{M}_{0,n}(k)$.  The fact that the $p_i$ are $k$-points and the curve is stable implies that every irreducible component $D\subseteq C$ is isomorphic to $X_k$ and every node is a $k$-point.  There is a unique retract $\pi_D : C \to D$ of the inclusion $D \into C$ and after choosing an isomorphism $D \cong X_k$ we can regard $(\pi_D(p_1),\dots,\pi_D(p_n))$ as a point of $X_k^n$, called the \emph{component configuration} for $D$.  The $G$-orbit closure of this point is independent of the choice of isomorphism $D \cong X_k$.  Stability guarantees that at least three of the points $\pi_D(p_i)$ remain distinct on $D$, so this orbit closure is 3-dimensional.

\section{Proof of Theorem \ref{thm:prodstab}}\label{Proof of prodstab}

Theorem \ref{thm:prodstab} is used in the proof of our two main theorems to allow for a reduction to the case $n=4$ (recall \S\ref{intsec:Hilbert quotient}), but it is potentially useful in other contexts so we include its proof here before launching into the main content of this paper.  First, we recall some facts about the forgetful maps on $\ov{M}_{0,n}$.

\subsection{Stabilization morphisms} \label{section:Stabilization morphisms} For each inclusion $I \into N$ of finite sets of cardinality $ \geq 3$ there is a \emph{stabilization} morphism $s_I : \ov{M}_{0,N} \to \ov{M}_{0,I}$ which forgets the markings not in $I$ then stabilizes.  The morphism $s_{N} : \ov{M}_{0,N+1} \to \ov{M}_{0,N}$ is the universal curve over $\ov{M}_{0,N}$ \cite[\S 2]{Kn}.  It is straightforward to see that the stabilization morphisms are compatible with the boundary inclusions, in the sense that for every partition $N = K + L$ and every $I \subseteq N$ with $|I \cap K|, |I \cap L| \geq 2$, the diagram $$ \xym@C+60pt{ \ov{M}_{0,K+1} 
\times \ov{M}_{0,L+1}  \ar@{^(->}[d] \ar[r]^-{ s_{K \cap I +1} \times s_{L \cap I +1} } & \ov{M}_{0,K \cap I +1} \times \ov{M}_{0,L \cap I+1} \ar@{^(->}[d] \\ \ov{M}_{0,N} \ar[r]^-{s_I} & \ov{M}_{0,I}  } $$ commutes.  The vertical arrows here indicate the inclusion of boundary divisors in these moduli spaces; these maps are given in modular terms by attaching the two curves corresponding to the direct product along their last marked point.  If, say, $I \cap L = \emptyset$, then the diagram $$ \xym@C+60pt{ \ov{M}_{0,K+1} \times \ov{M}_{0,L+1}  \ar@{^(->}[d] \ar[rd]^-{s_I \pi_1} \\ \ov{M}_{0,N} \ar[r]^-{s_I} & \ov{M}_{0,I}  } $$ commutes, and if $|I \cap L| =1$ then the following diagram commutes: $$ \xym@C+60pt{ \ov{M}_{0,K+1} \times \ov{M}_{0,L+1}  \ar@{^(->}[d] \ar[rd]^-{s_{I \cap K+1} \pi_1} \\ \ov{M}_{0,N} \ar[r]^-{s_I} & \ov{M}_{0,I}  } $$

\subsection{Separating points and tangents}\label{section:Separating}

To prove Theorem \ref{thm:prodstab} we first note that the product of stabilization morphisms $\varphi := \prod_{|I|=4}s_I$ to the $\binom{n}{4}$ copies of $\ov{M}_{0,4}$ factors through the product over $I$ of cardinality $m\ge 4$, so it is enough to prove the case $m=4$.  Moreover, since $\varphi$ is a projective morphism between varieties it is enough to show that it separates points and tangent vectors.  We use induction on $n$, noting that the case $n=4$ is trivial.

\subsubsection*{Claim: $\varphi(k) : \ov{M}_{0,n}(k) \rightarrow \prod_I \ov{M}_{0,4}(k)$ is injective for any field $k$}  Let $x \ne y$ be distinct points of $\ov{M}_{0,n}(k)$.  The restriction of $\varphi(k)$ to $M_{0,n}(k)$ is injective, so we can assume $x$ is in a boundary component \[D_{K,L} = \ov{M}_{0,K+1}(k) \times \ov{M}_{0,L+1}(k)\] corresponding to a partition $[n] = K + L$, with $|K|,|L|\ge 2$.  Write $x = (x_1,x_2)$.  For any two-element subsets $K' \subseteq K$, $L' \subseteq L$ with $I = K' + L'$, the map $s_I$ takes $x$ to the element of $\ov{M}_{0,4}(k)$ corresponding to the reducible curve where the node separates the points marked by $K'$ and $L'$; in particular, $s_I$ is constant on $D_{K,L}$.  If $y \notin D_{K,L}$ then by Lemma \ref{lem:nodes}, below, there is $K'\subseteq K, L'\subseteq L$ such that for $I = K' + L'$ we have $s_I(y) \ne s_I(x)$.  Thus we can assume $y = (y_1,y_2) \in D_{K,L}$ with, say, $x_1\neq y_1$.  By the inductive hypothesis there is $I \subseteq K+1$ of size four such that  $s_I  : \ov{M}_{0,K+1} \rightarrow \ov{M}_{0,4}$ satisfies $s_I(x_1)\ne s_I(y_1)$.  On the other hand, by the compatibility of stabilization and boundary described above there is $J \subseteq [n]$ of size four such that $s_I \pi_1 :   \ov{M}_{0,K+1} \times \ov{M}_{0,L+1} \rightarrow \ov{M}_{0,4}$ agrees with the restriction of $s_J$ to $D_{K,L} \subseteq \ov{M}_{0,n}$.  Hence $s_J(x) \ne s_J(y)$.

\subsubsection*{Claim: $d \varphi(x) : T_x \ov{M}_{0,n} \rightarrow \prod_I T_{s_I (x)} \ov{M}_{0,4}$ is injective for each $x \in \ov{M}_{0,n}(k)$} This holds for $x \in M_{0,n}(k)$ so we can assume $x = (x_1,x_2)\in D_{K,L}$, as before.  It follows from the compatibility of stabilization and boundary, together with the inductive hypothesis, that $d \varphi (x)$ is injective when restricted to \[T_x D_{K,L} = T_{x_1} \ov{M}_{0,K+1} \oplus T_{x_2} \ov{M}_{0,L+1}.\]  On the other hand, for any two-element subsets $K' \subseteq K$, $L' \subseteq L$ with $I = K' + L'$ we observed above that $s_I$ is constant on $D_{K,L}$.  Therefore, its differential at $x$ kills $ T_x D_{K,L}$ and hence induces a map $$ d s_I (x) : N_x \to T_{s_I(x)} \ov{M}_{0,4} , $$ where $N$ is the normal bundle of $D_{K,L}$ in $\ov{M}_{0,n}$.  We will show that there is always a choice of $I$ such that this induced map is nonzero.  There is a unique node $q$ on the curve $x$ separating the markings in $K$ from those in $L$.  If $C_K,C_L$ are the components meeting at $q$, named in the obvious way, then $N_x \cong T_q C_K \otimes T_q C_L$.  By stability, $C_K$ has at least two special points $s_1, s_2$ besides $q$.  If $s_i$ is a marking, set $p_i := s_i$, otherwise we can choose a marking $p_i$ separated from $C_K$ by the node $s_i$.  Let $K' := \{ p_1, p_2 \} \subseteq K$.  Let $L' \subseteq L$ be defined analogously and set $I := K' + L'$.  Then the map from $x$ to its stabilization $s_I(x)$ does not contract $C_K$ or $C_L$ and hence $d s_I(x) : T_q C_K \otimes T_q C_L \rightarrow T_{s_I(x)} \ov{M}_{0,4}$ is nonzero.  This completes the proof, except for the lemma alluded to above: 

\begin{lem} \label{lem:nodes} For $(C,p_1,\ldots,p_n) \in \ov{M}_{0,n}$ and a partition $[n]=K + L$ with $|K|,|L|\ge 2$, there is a node of $C$ separating $K$ from $L$ if, and only if, for every two-element subsets $I \subseteq K$, $J \subseteq  L$, there is a node of $C$ separating $I$ from $J$. \end{lem}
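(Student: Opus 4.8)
The plan is to pass to the dual tree of the stable curve and reduce the statement to an elementary fact about subtrees. Let $T$ be the dual tree of $(C,p_1,\dots,p_n)$: its vertices are the irreducible components of $C$, its edges are the nodes, and a leg labelled $i$ is attached to the vertex carrying $p_i$. Since $C$ has arithmetic genus $0$, the graph $T$ is a tree, so every edge $e$ is a bridge and deleting it splits $T$ into two subtrees with vertex sets $A_e$ and $B_e$. With this dictionary, a node of $C$ separates a subset $S\subseteq[n]$ from a disjoint subset $S'\subseteq[n]$ exactly when the corresponding edge $e$ has every component carrying some $p_i$ with $i\in S$ in $A_e$ and every component carrying some $p_j$ with $j\in S'$ in $B_e$ (or the reverse).

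For nonempty $S\subseteq[n]$ let $T_S\subseteq T$ be the smallest subtree whose vertex set contains every component carrying some $p_i$ with $i\in S$; recall that $V(T_S)$ is the union, over pairs $i,i'\in S$, of the vertices along the path in $T$ between the components carrying $p_i$ and $p_{i'}$. The key reformulation is that a node separates $S$ from $S'$ if and only if $V(T_S)\cap V(T_{S'})=\emptyset$. Indeed, if an edge $e$ realizes the separation then $T_S$ and $T_{S'}$ are connected subgraphs lying on opposite sides of $e$, hence vertex-disjoint; conversely, if $V(T_S)\cap V(T_{S'})=\emptyset$ then the shortest path in $T$ from a vertex of $T_S$ to a vertex of $T_{S'}$ has at least one edge $e$, and no edge of that path lies in $T_S$ or $T_{S'}$, so deleting $e$ leaves $T_S$ and $T_{S'}$ on opposite sides and $e$ realizes the separation. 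In particular the ``only if'' direction of the lemma is immediate, since $V(T_I)\subseteq V(T_K)$ and $V(T_J)\subseteq V(T_L)$ whenever $I\subseteq K$ and $J\subseteq L$.

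For the ``if'' direction I argue contrapositively. Suppose no node separates $K$ from $L$, so by the reformulation there is a vertex $v\in V(T_K)\cap V(T_L)$. Using the union-of-paths description of $V(T_K)$ together with $|K|\ge 2$, there exist $i,i'\in K$ with $v$ on the path between the components carrying $p_i$ and $p_{i'}$; if this forces $i=i'$ --- that is, if $v$ is the component carrying $p_i$ --- replace $i'$ by any other element of $K$, so in all cases we may take $I:=\{i,i'\}$ of cardinality $2$. Produce a two-element subset $J\subseteq L$ with $v\in V(T_J)$ in the same way. Then $v\in V(T_I)\cap V(T_J)$, so no node separates $I$ from $J$, contradicting the hypothesis.

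I do not anticipate a genuine obstacle here: once one is on the dual tree the argument is pure tree combinatorics. The only points needing a little care are the standard description of $V(T_S)$ as a union of pairwise paths and the small bookkeeping --- which is exactly where the hypotheses $|K|,|L|\ge 2$ enter --- ensuring that the witnessing subsets $I$ and $J$ can be chosen of cardinality precisely two.
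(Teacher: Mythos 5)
Your argument is correct, but it takes a different route from the paper. The paper proves the lemma by induction on the number of irreducible components: it picks a leaf component $D$ with exactly one node $q$ and at least two markings, observes that one may assume all markings on $D$ lie in $K$, and applies the inductive hypothesis to the smaller curve $C'$ with the partition $(\{q\}+K\setminus K')+L$, i.e.\ it contracts a leaf and trades its markings for the node. You instead work directly on the dual tree and prove a clean non-inductive characterization: some node separates $S$ from $S'$ if and only if the minimal subtrees (Steiner hulls) $T_S$ and $T_{S'}$ spanned by the marked components are vertex-disjoint; the lemma then follows because $V(T_K)=\bigcup_{i,i'\in K}V(\mathrm{path}(c_i,c_{i'}))$, so any common vertex of $T_K$ and $T_L$ is already witnessed by a pair in $K$ and a pair in $L$ (this is exactly where $|K|,|L|\ge 2$ enters, and your replacement trick when the witnessing pair degenerates is fine). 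The paper's induction is shorter and stays in the language of curves, matching how the lemma is consumed in the proof of Theorem \ref{thm:prodstab}; your reformulation is somewhat more transparent and reusable, since the disjoint-hull criterion isolates the combinatorial content and avoids any case analysis on how markings distribute over a leaf component. Both arguments are complete and elementary; just make sure to state explicitly, as you implicitly use, that markings lie at smooth points so each $p_i$ sits on a unique vertex of the dual tree, and that the dual graph is a tree because $C$ has arithmetic genus zero.
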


\begin{proof} We proceed by induction on the number of irreducible components of $C$.  If $C$ is irreducible, there is nothing to prove, otherwise pick an irreducible component $D$ of $C = D \cup_q C'$ with at least two markings and exactly one node $q$.  There is nothing to prove if $D$ contains markings from both $K$ and $L$, so we can assume all of the markings $K'$ on $D$ are in $K$.  By induction we know the result holds for $C'$ with the partition $(\{q\} + K \setminus K' ) + L$ of the markings on $C'$, and this implies the result we want for $C$.  

We note that this can also be proven without induction (and we thank the referee for the following argument).  For any set $S$ of marked points there is a unique minimal subtree $T(S)$ of irreducible components that contain all marked points in $S$. A node separates $K$ from $L$ if and only if the trees $T(K)$ and $T(L)$ contain no irreducible component $l$ in common. If a node separates $K$ and $L$, the same node separates all two element subsets. If not, because of the tree structure, there exist $p_1,p_2 \in K$ and $q_1,q_2 \in L$ such that $T(p1,p2)$ and $T(q1,q2)$ both contain a given common component $l$; therefore these pairs are not separated.
\end{proof}  

\section{Orbit closures in $(\PP^1)^n$} \label{section:orbitclosures} 

In this section we study the $G$-orbit closure of points in $X^n$.  Throughout, we work over a field $k$, though we write $X^n$ instead of $X^n_k$, and every point is a $k$-point even though we write $x \in X^n$ instead of $x \in X^n(k)$.

\begin{lem}\label{lem:torsor} If $x \in X^n$ has type $P_1,\dots,P_l$ with $l \geq 3$, then $g \mapsto gx$ defines an isomorphism $G \cong Gx$.  In other words, the orbit $Gx$ is a $G$-torsor. \end{lem}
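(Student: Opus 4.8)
The statement is that if $x \in X^n$ has type $P_1, \dots, P_l$ with $l \geq 3$, then the orbit map $g \mapsto gx$ gives an isomorphism $G \cong Gx$. The plan is to show this map is injective on points (in the scheme-theoretic sense, i.e.\ has trivial stabilizer) and then conclude it is an isomorphism onto the orbit using standard facts about group actions on varieties.

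First I would reduce to the case of generic type: by the discussion in \S\ref{section:Notation}, $x = \Delta_P(y)$ for some $y \in U_l$, and $\overline{Gx} = \Delta_P(\overline{Gy})$ with $\Delta_P$ a closed embedding; since $\Delta_P$ restricts to an isomorphism of $Gy$ onto $Gx$, it suffices to treat $y \in U_l$, i.e.\ a configuration of $l \geq 3$ distinct points of $\PP^1$. The key step is then to show the stabilizer of $y$ in $G = \PGL_2$ is trivial. This is the classical fact that an automorphism of $\PP^1$ fixing three distinct points is the identity: if $g \in \PGL_2$ fixes $y_{i_1}, y_{i_2}, y_{i_3}$ (three of the distinct coordinates), then writing $g$ as a $2\times 2$ matrix up to scalar, the equation $g \cdot p = p$ for three distinct $p \in \PP^1$ forces $g$ to be scalar. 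This argument is valid over any field, and indeed over $\ZZ$, since it only uses that three distinct points impose enough conditions. Hence the scheme-theoretic stabilizer $G_y$ is trivial, so the orbit map $G \to Gy$ is a bijective morphism of varieties with trivial stabilizer.

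To upgrade bijectivity to an isomorphism, I would invoke the standard result that for an algebraic group $G$ acting on a variety, the orbit map $G \to Gx$ factors as $G \to G/G_x \xrightarrow{\sim} Gx$, where the second map is an isomorphism onto the locally closed orbit (see e.g.\ Borel or Springer on linear algebraic groups); since $G_x$ is trivial here, $G/G_x = G$ and the orbit map itself is the isomorphism. Alternatively, since $G$ and $Gx$ are both smooth of the same dimension, the orbit map is a bijective morphism between smooth varieties whose differential is everywhere injective (the tangent space to the stabilizer, i.e.\ the kernel of the differential at $e$, being zero), hence an isomorphism by generic smoothness plus homogeneity. Thus $Gx$ is a $G$-torsor.

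The main obstacle, such as it is, is purely bookkeeping: making sure the stabilizer computation is done scheme-theoretically (not just on $k$-points) so that the conclusion holds in families and over $\ZZ$, and citing the correct form of the orbit-isomorphism theorem in the generality needed. The geometric content --- three distinct points rigidify $\PP^1$ --- is elementary; the care is in the foundational phrasing.
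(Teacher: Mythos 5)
Your proof is correct and follows essentially the same route as the paper, whose entire proof is the observation that $G=\PGL_2$ acts simply transitively on triples of distinct points (i.e.\ on $U_3$) --- exactly the trivial-stabilizer fact you establish, combined with the standard orbit-map isomorphism. Your extra care about the scheme-theoretic (not just pointwise) triviality of the stabilizer is the right thing to insist on so that the orbit map is an isomorphism and not merely a bijection in positive characteristic.
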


\begin{proof} This follows from the fact that $G$ acts simply transitively on $U_3$. \end{proof} 

\subsection{Chow classes of orbit closures}
For $I \subseteq [n]$, $y \in X^n$ set $W_I(y) := \pi_I^{-1}(\pi_I(y))$ and $Z_I(y) :=  \pi^{-1}_{I^c}(\pi_{I^c}(y))$.  Then $Z_I(y) \cong X^I$ and the class $H_I := [Z_I(y)] \in A_I(X^n)$ is independent of the choice of $y$.  The classes $H_I$ form a linear basis for $A_*(X^n) \cong \ZZ^{2^n}$.  Note that $W_I(y)$ and $Z_I(y)$ intersect transversely in the single point $y$ and the corresponding cycle classes $H_{I^c}$ and $H_I$ are dual under the perfect intersection pairing $A_*(X^n) \times A_{n-*}(X^n) \to \ZZ$.

\begin{prop} \label{prop:Chowclass} Let $x \in X^n$ be a point of type $(P_1,\dots,P_l)$, with $l \geq 3$.  Then $$[\overline{G x}] = \sum H_I  \in A_3(X^n),$$ where the sum is over three-element subsets $I \subseteq [n]$ with $| I \cap P_m | \leq 1$ for all $m$.  In particular, the Chow quotient $X^n\ChowQ G$ is embedded in the Chow variety $\Chow_{\beta} X^n$ parameterizing cycles of class \vspace{-.1in} $$\beta := \sum_{|I|=3} H_I $$ since this is the Chow class of a generic orbit closure.
\end{prop}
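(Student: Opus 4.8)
\noindent \emph{Proof strategy.} The plan is to read off the coefficients of $[\ov{Gx}]$ in the basis $\{H_I : |I|=3\}$ of $A_3(X^n)$ by pairing against the dual basis. Since $H_I$ is dual to $H_{I^c}$ under the perfect intersection pairing, we have $[\ov{Gx}] = \sum_{|I|=3} c_I\, H_I$ with $c_I := [\ov{Gx}]\cdot H_{I^c} \in \ZZ$, so everything reduces to evaluating these intersection numbers. The useful reformulation is that $c_I$ is a projection degree: writing $I=\{a,b,c\}$ and using $H_{I^c} = [\pi_I^{-1}(\pi_I(y))]$ together with flatness of $\pi_I : X^n \to X^I \cong X^3$ and the fact that any two points of $X^3$ are rationally equivalent, one gets $H_{I^c} = \pi_I^*[\mathrm{pt}]$; the projection formula then identifies $c_I$ with the integer $d$ for which $\pi_{I*}[\ov{Gx}] = d\,[X^3]$ in $A_3(X^3)$, that is, with the degree of $\pi_I|_{\ov{Gx}} : \ov{Gx} \to X^3$, under the convention that $d=0$ when this map fails to be dominant. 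I single this step out because it needs no genericity or transversality input, so the argument works over an arbitrary base field $k$.

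It then remains to compute $\deg(\pi_I|_{\ov{Gx}})$, and since $\ov{Gx}$ is the closure of the irreducible orbit $Gx$, this equals $\deg(\pi_I|_{Gx})$, whose image is the diagonal $G$-orbit of $(x_a,x_b,x_c)$ in $X^3$. If $a,b,c$ lie in three distinct parts of the type $(P_1,\dots,P_l)$, then $x_a,x_b,x_c$ are pairwise distinct, so $(x_a,x_b,x_c)\in U_3$; as $G$ acts simply transitively on $U_3$ (the fact underlying Lemma \ref{lem:torsor}), $g\mapsto(gx_a,gx_b,gx_c)$ is an isomorphism $G\cong U_3$, hence $\pi_I|_{Gx}$ is an isomorphism onto the dense open $U_3\subseteq X^3$ and $c_I=1$. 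If instead two of $a,b,c$ lie in the same part, then two of the coordinates $x_a,x_b,x_c$ coincide, so $\pi_I(Gx)$ --- and therefore $\pi_I(\ov{Gx})$ --- is contained in a small-diagonal copy of $X^2$ inside $X^3$, the map is not dominant, and $c_I=0$. Thus $c_I=1$ exactly when $|I\cap P_m|\le 1$ for every $m$, giving the asserted formula; note that this direct computation sidesteps the reduction to generic type via the diagonal embedding $\Delta_P$.

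For the final assertion, a configuration $u$ of generic type has each part of its type a singleton, so every three-element $I$ satisfies $|I\cap P_m|\le 1$ and hence $[\ov{Gu}] = \sum_{|I|=3} H_I = \beta$. Since the Chow class is locally constant in algebraic families of cycles, the closure $X^n\ChowQ G$ of the locus of generic orbit closures still parameterizes only cycles of class $\beta$, i.e.\ it lies inside $\Chow_\beta(X^n)$. The step I expect to take the most care is the first one: correctly identifying $c_I$ with $\deg(\pi_I|_{\ov{Gx}})$ through the projection formula and the vanishing of proper pushforward under a drop in dimension. Once that is in place, the orbit-closure computation is immediate from simple transitivity of $G$ on $U_3$.
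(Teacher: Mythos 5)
Your proposal is correct, and the computational core is genuinely different from the paper's. Both arguments share the same frame: expand $[\ov{Gx}]$ in the basis $\{H_I : |I|=3\}$ of $A_3(X^n)$ and evaluate the coefficients as intersection numbers against the dual classes $H_{I^c}=[W_I(y)]$. The paper computes each number by hand with a fixed representative: for $y\in U_n$ it shows $\ov{Gx}\cap W_I(y)$ is empty when two indices of $I$ lie in one part and is a single transverse point otherwise, which requires both a transversality check (infinitesimal effectiveness of the $G$-action on distinct triples) and a limit argument with sequences/DVR points to rule out extra intersection points coming from the boundary of $\ov{Gx}$. You instead write $H_{I^c}=\pi_I^*[\mathrm{pt}]$ and use the projection formula to convert the number into the degree of $\pi_I|_{\ov{Gx}}$, which is $1$ when $\pi_I|_{Gx}$ is the orbit-map isomorphism onto $U_3$ (the same simple-transitivity input as Lemma~\ref{lem:torsor}) and $0$ when two of the chosen coordinates coincide, since the image then lies in a proper closed subset of $X^3$ of dimension $2$. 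What your route buys: only birational data enters, so the transversality and uniqueness-of-intersection-point analysis --- the most delicate part of the paper's proof --- disappears, and validity over an arbitrary field is manifest because multiplicities never arise. What the paper's route buys: a sharper geometric fact (each $W_I(y)$ meets $\ov{Gx}$ in at most one reduced point), and its limit technique is reused in the proof of Lemma~\ref{lem:boundary}. Two minor points: the degree-$0$ image lies in a partial diagonal $\{z_a=z_b\}\cong X^2$ rather than the small diagonal of $X^3$, and the fact $\dim\ov{Gx}=3$, which you use implicitly to place the class in $A_3$, is exactly the content of Lemma~\ref{lem:torsor} that you already cite.
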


In \cite[Proposition 2.1.7]{K1} the homology class of a generic orbit closure for the action of $\PGL_{d+1}$ on $(\mathbb{P}^d)^n$ is given; when $d=1$ and $l=n$ it coincides with the above formula.

\begin{proof} By Lemma \ref{lem:torsor}, $\overline{G x}$ has dimension three.  Since cycles modulo rational equivalence, algebraic equivalence, and numerical equivalence coincide on $X^n$, all we have to show is that the cycle class $[\overline{G x}]$ has the right intersection numbers with cycle classes of codimension three.  Fix any $y \in U_n$.  It is enough to show: (i) $\overline{G  x} \cap W_I(y) = \emptyset$ when $I$ does not satisfy the condition of the proposition, and (ii) the cycles $\overline{G  x}$ and $W_I(y)$ meet transversely in a single point when $I$ satisfies the condition of the proposition.

For (i), if $I = \{ i,j,k \}$ does not satisfy the condition of the proposition, then we can find $m \in \{ 1, \dots, l \}$ with, say, $i,j \in P_m$, after possibly relabelling $i,j,k$.  That is, $x_i = x_j$, hence $z_i = z_j$ for every $z \in \overline{Gx}$, since this condition is closed and $G$-invariant.  But $y \in U_n$, so $y_i \neq y_j$, hence $z_i = y_i \neq y_j = z_j$ for every $z \in W_{ijk}(y)$, so (i) is proved.  

For (ii), if $I = \{ i,j,k \}$ satisfies the condition of the proposition, then $x_i, x_j, x_k \in \PP^1$ are distinct.  Now $y_i, y_j, y_k$ are also distinct, since $y \in U_n$, so there is a unique $g \in G$ with $$gx_i = y_i, \; gx_j = y_j, \; gx_k = y_k.$$  After replacing $x$ by $gx$, we can assume without loss of generality that $g = \Id$.  Then it is clear that $x \in W_I(y) \cap Gx$, so we want to show that $x$ is the only point in $W_I(y) \cap \overline{Gx}$ and that the intersection is transverse at $x$.  Certainly $Gx \cong G$ is smooth of dimension three at $x$, so the transversality amounts to saying that any nonzero tangent vector $v \in \mathfrak{g}$ to the identity moves $x$ out of $W_I(y)$ infinitesimally, i.e., $v \cdot x \in T_x X^n$ should not be in $T_x W_I(y)$.  This is equivalent to saying $v$ will infinitesimally move one of $$x_i = y_i, \, x_j=y_j, \, x_k=y_k,$$ or equivalently that the $G$-action on $\PP^1$ is infinitesimally effective on triples of distinct points.  This is clear since it is a principal action.  Finally, suppose $z \in \overline{Gx} \cap W_I(y)$, so we want to show that $z=x$.  The map from $Gx$ to $(\PP^1)^3$ is a degree 1 map. Considering its extension to $\ov{Gx}$, the preimage of the point $(y_i,y_j,y_k)$ remains connected since the generic fiber is connected. Therefore, the preimage cannot contain any point other than $(x_i,x_j,x_k)$ since this point is isolated and is in the preimage.
\end{proof}

\subsection{Boundary of an orbit closure}

\begin{lem} \label{lem:boundary} Let $x \in X^n$ be a point of type $P=(P_1,\dots,P_l)$, with $l \geq 3$.  The boundary $\overline{Gx}\setminus Gx$ is a union of two-dimensional orbit closures depending only on the type of $x$: \be \overline{Gx} \setminus Gx & = & \bigcup_{i=1}^l \Delta_{P_i, P^c_i} \ee This union is irredundant. \end{lem}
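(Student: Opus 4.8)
The plan is to reduce to the case where $x$ has generic type and, in that case, to read off the boundary from the way one-parameter families in $G=\PGL_2$ degenerate to rank-one matrices inside $\PP^3$, the space of $2\times 2$ matrices up to scalar. First I would dispose of the general case by the reduction already recalled in \S\ref{section:Notation}: writing $x=\Delta_P(y)$ with $y\in U_l$ we have $\overline{Gx}=\Delta_P(\overline{Gy})$, and since $\Delta_P$ is a $G$-equivariant closed embedding also $Gx=\Delta_P(Gy)$, so $\overline{Gx}\setminus Gx=\Delta_P(\overline{Gy}\setminus Gy)$. A direct check shows $\Delta_P$ carries $\{z\in X^l:z_j=z_k\text{ for all }j,k\neq i\}$ onto $\Delta_{P_i,P_i^c}$, and as $\Delta_P$ is injective, irredundancy of $\bigcup_i\Delta_{P_i,P_i^c}$ will follow from irredundancy of the analogous union for $y$. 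Thus it suffices to treat $l=n$, $x\in U_n$, where the statement reads $\overline{Gx}\setminus Gx=\Delta_\bullet$, each $\Delta_i\cong X^2$ being the closure of the $2$-dimensional orbit of a point of type $(\{i\}^c,\{i\})$; here $\Delta_i$ is disjoint from $Gx\subseteq U_n$ since $\Delta_i\subseteq\Delta_{\rm big}$ (using $n\ge 3$).

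For the inclusion $\Delta_\bullet\subseteq\overline{Gx}$ it is enough, since $\overline{Gx}$ is closed and $G$-stable and $G$ acts transitively on pairs of distinct points, to put one point of $\Delta_i$ with distinct values into $\overline{Gx}$. Moving $x$ by $G$ so that $x_i=\infty$, the family $g_t\colon w\mapsto w/t$ sends $x_j\mapsto x_j/t\to 0$ for $j\neq i$ and fixes $x_i$, so $\lim_t g_t x=(0,\dots,0,\infty,0,\dots,0)\in\Delta_i$ and its orbit closure is all of $\Delta_i$.

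The main step is the reverse inclusion. Given $z\in\overline{Gx}\setminus Gx$, I would show that at least $n-1$ of its coordinates agree, which puts $z$ in $\Delta_i$ for the (at most one) remaining index, or in $\Delta_{\rm small}$ if there is none. Pick a DVR $R$ with fraction field $K$ and a map $\Spec R\to\overline{Gx}$ sending the closed point to $z$ and the generic point to some $g\cdot x$, $g\in G(K)$ (possible as $Gx\cong\PGL_2$ is dense in $\overline{Gx}$). Regarding $g\in\PGL_2(K)\subseteq\PP^3(K)$ and using properness of $\PP^3$, extend to $\tilde g\colon\Spec R\to\PP^3$ with value at the closed point a nonzero $2\times 2$ matrix $M$, well-defined up to scalar. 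If $M$ is invertible then $\tilde g$ lands in $\PGL_2$ (its determinant is nonvanishing on the trait), so $t\mapsto\tilde g(t)\cdot x$ is a morphism $\Spec R\to X^n$ agreeing with the given one at the generic point and hence, by separatedness of $X^n$, everywhere; this forces $z=M\cdot x\in Gx$, a contradiction. So $M$ has rank one, say $M=v\otimes\phi$, with image point $p_0=[v]$ and kernel point $q=[\ker\phi]$. For each index $j$ with $x_j\neq q$, the $j$-th coordinate of our map and the rational map $t\mapsto\tilde g(t)\cdot x_j$ both extend to morphisms $\Spec R\to\PP^1$ (a regular curve into a proper target) and agree at the generic point, hence everywhere; evaluating the second at the closed point gives $M\cdot x_j=p_0$, since the rank-one map $M$ contracts $\PP^1\setminus\{q\}$ to $p_0$. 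As $x\in U_n$ has at most one coordinate equal to $q$, we conclude $z_j=p_0$ for at least $n-1$ indices $j$, as claimed.

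Finally, irredundancy: the $\Delta_i$ are $n$ pairwise distinct irreducible surfaces — for $n\ge 3$, the configuration with $\infty$ in slot $i$ and $0$ elsewhere lies in $\Delta_i$ but not in $\Delta_j$ for $j\neq i$ — so, being irreducible and of the same dimension, none lies in the union of the others; applying the injection $\Delta_P$ transfers this to general type. Each $\Delta_{P_i,P_i^c}$ is by construction the closure of the orbit of a type-$(P_i,P_i^c)$ point, hence a $2$-dimensional orbit closure depending only on the partition $P$, that is, only on the type of $x$, which is exactly the assertion. The step I expect to require real care is the reverse inclusion above — controlling an arbitrary degeneration of the orbit — and the point there is that limits of $\PGL_2$ inside $\PP^3$ are rank-one matrices whose contraction behaviour dictates precisely which configurations can appear in the boundary.
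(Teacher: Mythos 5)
Your argument is correct, and its outer structure matches the paper's: the reduction to generic type via $\Delta_P$ and the inclusion $\Delta_\bullet\subseteq\overline{Gx}$ via a one-parameter scaling limit are essentially the same steps the paper takes. Where you genuinely diverge is the reverse inclusion, which is the heart of the lemma. The paper picks coordinates with $y_i\neq y_j=y_k$, takes a sequence $g_t$ with $g_tx\to y$, and renormalizes by the unique $h_t$ matching three coordinates (the same principality-on-triples trick used in the transversality part of Proposition \ref{prop:Chowclass}); the composite then lies in the $\GG_m$ fixing $0,\infty$, is forced to diverge, and so contracts every coordinate other than the $i$-th to $\infty$. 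You instead compactify $\PGL_2$ inside $\PP^3$ of $2\times 2$ matrices, replace sequences by a DVR-valued point of $\overline{Gx}$ (the same EGA II.7.1.9 device the paper invokes in the proof of Theorem \ref{thm:mapextension}), and observe that the limit matrix $M$ is either invertible, contradicting $z\notin Gx$, or of rank one, in which case it contracts $\PP^1\setminus\{q\}$ to $p_0$, forcing $z_j=p_0$ for all but at most one index. What your route buys: it is intrinsically valuative, so it needs no sequence language (the paper itself relegates the DVR version to a footnote), it is uniform over any field in keeping with the paper's over-$\ZZ$ aims, and it identifies the boundary orbit directly from the kernel and image points of $M$. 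What the paper's route buys: it stays entirely within tools already set up in \S\ref{section:orbitclosures} and avoids introducing the matrix compactification of $\PGL_2$. You also spell out irredundancy and the identification of each $\Delta_{P_i,P_i^c}$ as a two-dimensional orbit closure, points the paper leaves implicit; both are correct as you state them.
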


\begin{proof}  By working in $\Delta_P$, it suffices to treat the case $x \in U_n$.  Here we want to show that $\overline{Gx} \setminus Gx = \Delta_\bullet$. Fix $i \in [n]$ and let $y \in \Delta_i$ be the point with $y_i = 0$ and $y_j = \infty$ for $j \neq i$.  Then $\Delta_i = \overline{Gy}$, so to obtain the containment $\supseteq$ it is enough to show that $y \in \overline{Gx}$.   Since $x \in U_n$, we can assume, after possibly replacing $x$ with a different point in its $G$ orbit, that $x_i = 0$ and $x_j = \infty$ for some $j \in [n]$.  In what follows we use the notion of a ``limit'' in a one-parameter subgroup for conceptual clarity, but the formalism of this in terms of DVRs, valid over an arbitrary field, is immediate.  The subgroup of $G$ stabilizing $0,\infty$ is a copy of the multiplicative group $\GG_m$ acting on $\PP^1$ by the usual scaling action.   The limit, as $g \to \infty$ in this $\GG_m$, of any $z \in \PP^1 \setminus \{ 0 \}$ is $\infty \in \PP^1$, so the fact that $x_j \in \PP^1 \setminus \{ 0 \}$ for $j \neq i$ implies that $\lim_{g \to \infty} gx = y$.

It remains to prove the containment $\subseteq$, so suppose $y \in \overline{Gx} \setminus Gx$.  If all the coordinates of $y$ are equal, then $y \in \Delta_{\rm small} \subseteq \Delta_\bullet$, so assume $y_i \neq y_j$ for some $i,j \in [n]$.  Since $y \notin Gx$, there is $k \in [n] \setminus \{ i,j \}$ with, say, $y_j = y_k$.  Now $\Delta_\bullet$ is $G$-invariant, so after replacing $y$ with a different point in its $G$-orbit we can assume that $y_i = 0$ and $y_j = y_k = \infty$.  We can also assume $x_i = 0, x_j = \infty, x_k = 1$.  Our goal is then to show that, in fact, $y_l = \infty$ for every $l \neq i$.  Choose $g_1,g_2,\dots \in G$ such that $g_t x \to y$ as $t \to \infty$ and choose a sequence $w_1, w_2,\ldots \in \PP^1 \setminus \{ 0 , \infty \}$ converging to $\infty$.  Since $x_i, x_j, x_k$ are distinct, so are $g_t x_i, g_t x_j, g_t x_k$, so there is a unique $h_t \in G$ such that $$h_t g_t x_i = 0, h_t g_t x_j = \infty, h_t g_t x_k = w_t . $$  Using the fact that the $G$-action is principal on distinct triples and the sequences $h_t g_t x$ and $g_t x$ are approaching the same point at three coordinates, one sees, as in the proof of Proposition~\ref{prop:Chowclass}, that $$ \lim_{t \to \infty} h_t g_t x = \lim_{t \to \infty} g_t x = y . $$  Each $h_tg_t$ fixes $0$ and $\infty$, so the $g_th_t$ range over the $\GG_m < G$ acting by scaling the $\PP^1$.  The sequence $h_tg_t$ cannot converge in $\GG_m$ itself, for then $y = \lim h_tg_t x$ would be in the actual orbit $Gx$, and it cannot converge to $0$ because the $h_tg_t$ move $x_k = 1 \in \PP^1$ to $\infty$ as $t \to \infty$, so the sequence converges to $\infty$.  Thus $h_t g_t$ moves any $z \in \PP^1 \setminus \{ 0 \}$ off to $\infty$ as $t \to \infty$, so $y_l = \lim h_t g_t x_l = \infty$ for every $l \neq i$ since $x_l \neq 0 = x_i$.
\end{proof}

\section{Proof of Lemma \ref{lem:n=4}}\label{section:crossratio}

The proof of Lemma \ref{lem:n=4} is rather straightforward, but the objects that come up during it will be useful in later sections so we go through it carefully now.

\subsection{Classical cross-ratio}

Each $x \in U_4$ has cross-ratio given by \[t(x) :=  \frac{(x_4-x_1)(x_2-x_3)}{(x_2-x_1)(x_4-x_3)} \in \PP^1 \setminus \{ 0,1,\infty \} \] as this formula is well-defined even when $x_i=\infty$ for some $i$.  The function $t(x)$ depends only on the $G$-orbit of $x$, and our convention is that it satisfies $t(0,1,\infty,t)=t$.

\subsection{Multi-homogeneous forms}\label{section:Multi-homogeneous forms}

For fixed $x\in X^4$ the function \begin{equation}\label{eq:form} f_x(z) := (x_4-x_1)(x_2-x_3)(z_2-z_1)(z_4-z_3)  -  (x_2-x_1)(x_4-x_3)(z_4-z_1)(z_2-z_3) \end{equation} is identically zero if and only if $x\in\Delta_\bullet$.  Thus for $x\in X^4\setminus\Delta_\bullet$ we may consider its homogenization \[f_{x} \in \PP\H^0( X^4, \O(1,1,1,1)) = \Hilb(X^4) = \Chow(X^4).\]  If $x\in U_4$ then $f_x$ vanishes on $\Delta_\bullet$ and at points of $U_4$ with the same cross-ratio as $x$, so $f_x$ cuts out the orbit closure $\overline{Gx} \subseteq X^4$ (cf. Lemma \ref{lem:boundary}).  If $x\in \Delta_{\rm big}\setminus\Delta_\bullet$, say $x_1=x_2$, then $f_x$ cuts out the union of the diagonals $z_1=z_2$ and $z_3=z_4$, each of which is a degenerate orbit closure.  

\subsection{Cross-ratio morphism}
We can now define a $G$-invariant morphism \be r : X^4 \setminus \Delta_\bullet & \to & \PP \H^0( X^4, \O(1,1,1,1)) = \PP^{15}\\ x & \mapsto & f_{x}. \ee  In particular, there is an induced map \begin{equation}\label{eq:4incl}M_{0,4} = U_4/G \into \PP\H^0( X^4, \O(1,1,1,1)),\end{equation} and the Hilbert and Chow quotients $X^4\quotient G$ are the closure of the image of this map.  

\subsection{Isomorphism to the Hilbert/Chow quotients}
The morphism $r$ gives a natural way to extend the inclusion (\ref{eq:4incl}) to a morphism \begin{equation}\label{eq:4iso}\ov{M}_{0,4} \rightarrow \PP\H^0( X^4, \O(1,1,1,1)).\end{equation}  Indeed, after choosing a trivialization $M_{0,4}\times\PP^1  \rightarrow M_{0,4}$ of the universal curve, the four sections $p_i$ can be regarded as a map $p : M_{0,4} \to U_4$.  For concreteness, let $p_1,p_2,p_3 : M_{0,4} \to \PP^1$ be the constant maps $0,1,\infty$, respectively.  Then $p : M_{0,4} \to U_4$ extends uniquely to a morphism $\ov{p} : \ov{M}_{0,4} \to X^4 \setminus \Delta_\bullet$ defined by $\ov{p}(t) = (0,1,\infty,t)$.  The composition $r\overline{p}$ is the unique extension of (\ref{eq:4incl}).  We claim that it is an isomorphism onto its image.  Indeed, it is clear from the expression in (\ref{eq:form}) that $f_{(0,1,\infty,t)}$ depends linearly on $t$, so $r\ov{p} : \PP^1 \rightarrow \PP^{15}$ is a non-constant linear map.  Thus $(\ref{eq:4iso})$ is an isomorphism between $\ov{M}_{0,4}$ and the Hilbert and Chow quotients $X^4\quotient G$.

\section{Hilbert polynomial computations}\label{section:Hilbertpolynomial}

We are ready to perform the Hilbert polynomial computations that will be used in the sequel to show that the subschemes $\mathcal{Z}(C,p_1,\ldots,p_n)$ from \S\ref{intsec:Flat family} form a flat family over $\ov{M}_{0,n}$.  Recall that for a closed subscheme $Z \subseteq X^n$, the \emph{Hilbert polynomial} is the function \be p_Z : \ZZ^n & \to & \ZZ \\ (t_1,\dots,t_n) & \mapsto & \chi(Z,\O_{X^n}(t_1,\dots,t_n)|_Z) . \ee  For any such $Z$, $p_Z \in \QQ[t_1,\ldots,t_n]$ is a polynomial function of the $t_i$.  For example, \be p_{X^n}(t_1,\ldots,t_n) & = & \prod_{i=1}^n (t_i+1). \ee

\subsection{Generic orbit closure}

\begin{prop} \label{prop:GenHilbpoly} For any $x \in U_n$, the Hilbert polynomial of $Z := \overline{Gx}$ is: \be p_Z(t_1,\ldots,t_n) & = &\sum_{|I| \le 3}~\prod_{i \in I} t_i . \ee \end{prop}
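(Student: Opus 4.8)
The plan is to compute $p_Z$ by relating it to the known Chow class of $Z = \overline{Gx}$ and to the Hilbert polynomials of the boundary strata. The cleanest route is induction on $n$, using the filtration of $Z$ by its boundary. First I would dispose of the base cases: for $n \le 3$ the orbit closure is all of $X^n$ (by Lemma \ref{lem:torsor} the orbit is a torsor, and for $n=3$ it is already dense of the full dimension), so $p_Z = \prod_{i=1}^n(t_i+1)$, which indeed equals $\sum_{|I|\le 3}\prod_{i\in I}t_i$ when $n\le 3$ since every $I\subseteq[n]$ has $|I|\le 3$. For $n=4$, the orbit closure $\overline{Gx}$ is the hypersurface cut out by the multilinear form $f_x$ of multidegree $(1,1,1,1)$ constructed in \S\ref{section:crossratio}, so $\O_Z$ fits in the exact sequence $0 \to \O_{X^4}(-1,-1,-1,-1) \to \O_{X^4} \to \O_Z \to 0$; twisting and taking Euler characteristics gives $p_Z(t) = \prod(t_i+1) - \prod t_i$, which is exactly $\sum_{|I|\le 3}\prod_{i\in I}t_i$.

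For the inductive step with $n \ge 5$, the idea is to use the exact sequence relating $Z = \overline{Gx}$ to one of its boundary divisors. By Lemma \ref{lem:boundary}, $\overline{Gx}\setminus Gx = \Delta_1\cup\cdots\cup\Delta_n$ (for $x\in U_n$), and each $\Delta_i = \overline{Gy_i}$ for a point $y_i$ whose nonzero coordinates collapse: retracting, $\Delta_i \cong \overline{G x'}$ for a generic configuration $x'\in U_{n-1}$ sitting inside $X^n$ via the embedding $i_I$ of \S\ref{section:Indexing sets}. The hope is to pick a single coordinate hyperplane section of $Z$ — say the divisor $Z\cap\{z_n = c\}$ for generic $c$, or better $Z\cap W_{\{n\}}(y)$ — compute its Hilbert polynomial by the inductive hypothesis applied to the $(n-1)$-variable orbit closure it restricts to, and then integrate back up. Concretely, intersecting $Z$ with the divisor $z_n = \text{const}$ should give (generically) the orbit closure of the remaining $n-1$ points together with the constraint; one then writes $0 \to \O_Z(t_1,\dots,t_n-1) \to \O_Z(t_1,\dots,t_n) \to \O_{Z\cap\{z_n=c\}}(t_1,\dots,t_n) \to 0$ and reads off that $p_Z(t) - p_Z(t_1,\dots,t_{n-1},t_n-1)$ equals the Hilbert polynomial of the divisor, which by induction is $\sum_{|J|\le 3,\, n\notin J}\prod_{j\in J}t_j$ summed appropriately — matching the claimed difference $\sum_{|I|\le 3,\, n\in I}\prod_{i\in I}t_i \big/ t_n \cdot$ (coefficient bookkeeping). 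Combined with the value at $t_n = 0$ (where $Z$ restricts to the full orbit closure in the remaining variables, handled by induction), this pins down $p_Z$ uniquely.

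The main obstacle I anticipate is controlling the scheme structure of the hyperplane section $Z\cap\{z_n=c\}$: a priori this slice could be non-reduced or could acquire extra components coming from the boundary $\Delta_\bullet$, and one needs it to be (generically, for suitable $c$) exactly the reduced orbit closure of an $(n-1)$-point configuration so that the inductive hypothesis applies on the nose. I would handle this either by a Bertini-type genericity argument — choosing $c$ outside the finitely many bad values — or, more robustly, by instead using the Chow class computation of Proposition \ref{prop:Chowclass}: since $[\overline{Gx}] = \sum_{|I|=3}H_I$ and the Hilbert polynomial's leading (degree-3) terms are determined by this class as $\sum_{|I|=3}\prod_{i\in I}t_i$, it remains only to identify the lower-order terms, which can be done by the additivity argument along the boundary filtration $Gx \subseteq \overline{Gx}$ together with the fact that $Gx\cong G$ is affine of dimension 3 (so contributes nothing of degree $<3$ in a way that is controlled by the strata). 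Reconciling these two approaches — Chow class for the top terms, boundary induction for the rest — is the delicate bookkeeping step, but each individual piece is routine.
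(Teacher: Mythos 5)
Your base cases are fine (including the $n=4$ hypersurface computation), but the inductive step has a genuine gap. The slice $Z\cap\{z_n=c\}$ is not an object to which the inductive hypothesis applies: its two-dimensional part is the closure of $\{gx : gx_n=c\}$, a codimension-one subvariety of the three-dimensional orbit closure $\overline{Gx'}\subseteq X^{n-1}$, not itself the orbit closure of any $(n-1)$-point configuration (the boundary $\Delta_\bullet$ meets the slice only in dimension one). So the exact sequence $0\to\O_Z(t_1,\dots,t_n-1)\to\O_Z(t_1,\dots,t_n)\to\O_{Z\cap\{z_n=c\}}\to 0$ is valid ($Z$ is integral, so $z_n-c$ is a non-zerodivisor), but the Hilbert polynomial of the slice is unknown and would require a separate induction of its own; the difficulty is not just possible non-reducedness or bad values of $c$. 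Your fallback is also not sufficient as stated: the Chow class $[\overline{Gx}]=\sum_{|I|=3}H_I$ determines only the degree-three part of $p_Z$, and the proposed "additivity along the filtration $Gx\subseteq\overline{Gx}$" does not make sense for coherent Euler characteristics --- $\chi(Z,\O(t))$ is not additive over an open stratum and its closed complement (one would need the Hilbert polynomial of the ideal sheaf of $\Delta_\bullet$ in $Z$, which the affineness of $Gx\cong G$ does not give you), so the lower-order terms are not pinned down.

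The paper's proof avoids slicing altogether: it degenerates within the family of orbit closures. Writing $x'=\pi_{[n-1]}(x)$, it builds the subscheme $W\subseteq X^n\times\PP^1_t$ cut out by the cross-ratio forms $f_{\pi_I(x',t)}$, which is the $G$-orbit closure of the section $t\mapsto(x',t,t)$, hence integral and flat over $\PP^1_t$; the general fiber is $\overline{G(x',t)}$ (so the fiber over $x_n$ is $Z$), while the fiber over $t=x_i$ is the union of $\Delta_{i=n}(\overline{Gx'})$ --- a genuine $(n-1)$-point generic orbit closure, covered by induction after restricting $\O(t_1,\dots,t_n)$ to the diagonal (which substitutes $t_i+t_n$ for $t_i$) --- and the diagonal $\Delta_{i,n,[n-1]\setminus i}\cong X^3$, glued along a copy of $X^2$. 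Flatness plus the Mayer--Vietoris sequence $0\to\O_{Z_i}\to\O_{Z_i'}\oplus\O_{Z_i''}\to\O_\Delta\to 0$ then gives a recursion whose solution is the claimed polynomial. If you want to salvage your approach, you would have to replace the hyperplane slice by such a degeneration (or independently compute the Hilbert polynomials of the two-dimensional sections), which is essentially the paper's argument.
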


\begin{proof} If $n=3$ then $Z=X^n$ and the above equality is clear, so assume $n > 3$.  We calculate the Hilbert polynomial of $Z$ by constructing a flat degeneration of $Z$ to a subscheme whose Hilbert polynomial we can compute by induction on $n$.  

Given $x\in U_n$, let $x' := \pi_{[n-1]}(x) \in U_{n-1}$.  Consider the subscheme $W \subseteq X^n \times \PP^1_t$ consisting of those $(y,t) \in X^n \times \PP^1_t$ such that $f_{\pi_I(x',t)}(\pi_I(y)) =0$ for every four-element subset $I \subseteq [n]$.  Here $f_{\pi_I(x',t)}\in \PP\H^0( X^4, \O(1,1,1,1))$ is the multi-homogeneous form introduced in \S\ref{section:Multi-homogeneous forms}.  This $W$ is $G$-invariant for the $G$-action $g \cdot (y,t) = (gy,t)$, but \emph{not} for the diagonal action $g \cdot (y,t) = (gy,gt)$.  In particular, $G$ acts on each fiber of the projection $\pi : W \to \PP^1_t$.  Note that $W$ is the $G$-orbit closure of the section $s(t) :=(x',t,t)$ of $\pi$.  Since $x'$ has at least three distinct coordinates this $G$-orbit is $G \times \PP^1_t$, so $W$ is an integral scheme and $\pi$ is dominant, hence flat.

For $t \in \PP^1 \setminus \{ x_1,\dots,x_{n-1} \}$ the fiber $\pi^{-1}(t) \subseteq X^n \times \{ t \} = X^n$ is the orbit closure of $(x',t) \in X^n$.  Indeed, $(x',t) \in U_n$ so the equations $f_{\pi_I(x',t)}$ cut out the orbit closure of $(x',t)$ in $X^n$.  In particular, the fiber over $x_n$ is $Z$.  For $i \in [n-1]$ we claim that the fiber $Z_i := \pi^{-1}(x_i)$ has two irreducible components $Z_i',Z_i''$ of dimension $3$ such that $Z_i'$ is $\overline{Gx'}$ embedded in the diagonal $\Delta_{i=n} \subseteq X^n$, $Z_i''$ is the diagonal $\Delta_{i,n,[n-1] \setminus i} \cong X^3$, and the components $Z_i', Z_i''$ intersect scheme theoretically in the diagonal $\Delta_{ \{ i,n \}, [n-1] \setminus i} \cong X^2$.

To see this, suppose first that $(y,x_i) \in Z_i$ but $y_i \neq y_n$.  We claim that $y \in Z_i'' = \Delta_{i,n,[n-1] \setminus i}$.  If not, then we would have $y_j \neq y_k$ for some $j,k \in [n-1] \setminus i$, which would violate $f_{\pi_{i,j,k,n}(x',x_i)}(y) = 0$.  Thus $Z_i \subseteq \Delta_{i = n} \cup Z_i''$.  On the other hand, if $(y,x_i) \in Z_i''$, then we have $f_{ \pi_I(x',x_i) }( \pi_I(y)) = 0$ for every four-element subset $I \subseteq [n]$ because either (i) $I$ contains at most one of $i,n$, hence $\pi_I(y) \in \Delta_\bullet \subseteq X^4$ and every form vanishes at $\pi_I(y)$, or (ii) $I = \{ j,k,i,n \}$ and $f_{ \pi_I(x',x_i) }( \pi_I(y)) = 0$ because $y_j=y_k$ and the $i^{\rm th}$ and $n^{\rm th}$ coordinates of $(x',x_i)$ coincide, so $f_{\pi_I(x',x_i)}$ cuts out the union of the hypersurface where the $j,k$ coordinates coincide and the hypersurface where the $i,n$ coordinates coincide.  So $Z_i' \cap Z_i'' = \Delta_{ \{ i,n \}, [n-1] \setminus i}$, scheme-theoretically.

Next, suppose $(y,x_i) \in Z_i \cap (\Delta_{i=n} \times \{ x_i \})$.  We then claim $y \in \Delta_{i=n}(\overline{Gx'})$, hence $Z_i \subseteq Z_i' \cup Z_i''$.  Indeed, as $I$ ranges over four-element subsets of $[n-1]$, we have $f_{\pi_I(x',x_i)} = f_{\pi_I(x')}$, and since $(y,x_i) \in W$, the latter forms vanish at $\pi_I(y)$, so $y \in \Delta_{i=n}(\overline{Gx'}).$   On the other hand, if $(y,x_i) \in Z_i' = \Delta_{i=n}(\overline{Gx'}) \times \{ x_i \}$ then we claim $y \in Z_i$, hence $Z_i = Z_i' \cup Z_i''$.  Indeed, $f_{\pi_I(x',x_i)}(y) = 0$ for every four-element subset $I \subseteq [n]$ because either (i) $I$ contains at most one of $i,n$ and we have the vanishing by definition of $\Delta_{i=n}(\overline{Gx'})$, or (ii) $I = \{ j,k,i,n \}$ and $f_{ \pi_I(x',x_i) }( \pi_I(y)) = 0$ because $y_i=y_n$ and the $i^{\rm th}$ and $n^{\rm th}$ coordinates of $(x',x_i)$ coincide.

For the remainder of the proof set $\Delta := \Delta_{ \{ i,n \}, [n-1] \setminus i}$.  From the exact sequence $$0 \to \O_{Z_i} \to \O_{Z_i'} \times \O_{Z_i''} \to \O_{\Delta} \to 0 $$ and the above flat degeneration, we see that \be p_Z(t_1,\dots,t_n) & = & p_{Z_i}(t_1,\dots,t_n) \\ & = & p_{Z_i'}(t_1,\dots,t_n) + p_{Z_i''}(t_1,\dots,t_n) - p_\Delta(t_1,\dots,t_n) . \ee  By induction, the Hilbert polynomial of a generic orbit closure in $X^{n-1}$ is \be q(t_1,\dots,t_{n-1}) & = & \sum_{J \subseteq [n-1], |J| \leq 3}~\prod_{j \in J} t_j . \ee Since \be \O(t_1,\dots,t_n)|_{\Delta_{i=n}} & = & \O(t_1, \dots,t_{i-1},t_i+t_n,t_{i+1}, \dots, t_{n-1}) \\ \O(t_1,\dots,t_n)|_\Delta & = & \O(t_i+t_n, \sum_{j \neq i,n} t_j ) \\ \O(t_1,\dots,t_n)|_{Z_i''} & = & \O( t_i, t_n, \sum_{j \neq i,n} t_j ) \ee we have \be p_{Z_i'}(t_1,\dots,t_n) & = & q(t_1,\dots,t_{i-1},t_i+t_n,t_{i+1}, \dots, t_{n-1}) \\ & = & \sum_{J \subseteq [n-1]\setminus i, |J| \leq 3}~\prod_{j \in J} t_j + \sum_{J \subseteq [n-1] \setminus i, |J| \leq 2}(t_i+t_n) \prod_{j \in J} t_j \\ p_{Z_i''}(t_1,\dots,t_n) & = & (1+t_i)(1+t_n)(1+\sum_{j \neq i,n} t_j) \\ p_{\Delta}(t_1,\dots,t_n) & = & (1+t_i+t_n)(1+\sum_{j \neq i,n} t_j) .\ee  The rest of the proof is straightforward polynomial arithmetic. \end{proof}

\subsection{Gluing constructions}

For a nodal curve there is an inductive structure to the subscheme $\mathcal{Z}(C,p_1,\ldots,p_n)$, but in order to harness it for the computation of the Hilbert polynomial we first need some preliminary results about gluing.

\begin{lem} \label{lem:ses} Let $Z$ be a reduced scheme, $Z_1, Z_2$ closed subschemes covering $Z$ as topological spaces.  Then the cartesian diagram of schemes $$ \xym{ Z_1 \cap Z_2 \ar[r] \ar[d] & Z_1 \ar[d] \\ Z_2 \ar[r] & Z }$$ is also cocartesian, and we have the following exact sequence of sheaves on $Z$: $$0 \to \O_Z \to \O_{Z_1} \oplus \O_{Z_2} \to \O_{Z_1 \cap Z_2} \to 0. $$ \end{lem}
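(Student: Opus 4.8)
The plan is to work locally on affine opens and reduce everything to commutative algebra. Since the statement is Zariski-local on $Z$, I would choose an affine open $\Spec A \subseteq Z$ and let $I_1, I_2 \subseteq A$ be the ideals cutting out $Z_1, Z_2$. The hypothesis that $Z_1 \cup Z_2 = Z$ as topological spaces says precisely that $V(I_1) \cup V(I_2) = \Spec A$, i.e.\ $V(I_1 \cap I_2) = \Spec A$, so $I_1 \cap I_2$ is contained in the nilradical of $A$; since $Z$ is reduced, $A$ is reduced and hence $I_1 \cap I_2 = 0$. With this in hand, the candidate exact sequence on global sections of $\Spec A$ is
\[
0 \to A \to (A/I_1) \oplus (A/I_2) \to A/(I_1+I_2) \to 0,
\]
where the first map is $a \mapsto (\bar a, \bar a)$ and the second is $(\bar b, \bar c) \mapsto \overline{b - c}$.

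Next I would check exactness of this sequence of $A$-modules directly. Surjectivity on the right is obvious. The kernel of the second map consists of pairs $(\bar b, \bar c)$ with $b - c \in I_1 + I_2$; writing $b - c = u + v$ with $u \in I_1$, $v \in I_2$, the element $a := b - u = c + v$ satisfies $\bar a = \bar b$ in $A/I_1$ and $\bar a = \bar c$ in $A/I_2$, so the pair comes from $A$, giving exactness in the middle. The first map is injective because its kernel is $I_1 \cap I_2 = 0$ — this is the one and only place the reducedness of $Z$ is used. Sheafifying (equivalently, observing the construction is compatible with localization and gluing the affine pieces) yields the asserted short exact sequence of sheaves on $Z$, with $\O_{Z_1 \cap Z_2}$ the structure sheaf of the scheme-theoretic intersection $\Spec A/(I_1+I_2)$.

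Finally, for the statement that the cartesian square is also cocartesian: a cocartesian square of affine schemes corresponds to the ring $A$ being the fiber product $A/I_1 \times_{A/(I_1+I_2)} A/I_2$ in the category of rings, and the exactness of the displayed four-term sequence is exactly the statement that the natural map $A \to A/I_1 \times_{A/(I_1+I_2)} A/I_2$ is an isomorphism. So the two claims are really the same computation, and I would present them together. I do not expect any serious obstacle here; the only subtlety worth flagging is that ``cocartesian in schemes'' must be interpreted correctly — one should note that gluing $Z_1$ and $Z_2$ along $Z_1 \cap Z_2$ in the category of schemes is computed affine-locally by the ring fiber product, which is standard, and that the topological covering hypothesis is what guarantees the glued scheme has underlying space $Z$ rather than something smaller.
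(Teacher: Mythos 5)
Your proposal is correct and follows essentially the same route as the paper: localize to $\Spec A$, reduce exactness of $0 \to A \to A/I_1 \oplus A/I_2 \to A/(I_1+I_2) \to 0$ to the vanishing of $I_1 \cap I_2$, and deduce that vanishing from the topological covering hypothesis together with reducedness of $A$. You are in fact slightly more explicit than the paper on the middle-exactness check and on identifying the cocartesian property with the ring fiber product $A/I_1 \times_{A/(I_1+I_2)} A/I_2$, which is a fine addition but not a different method.
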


\begin{proof} This is all local on $Z$, so we can assume $Z=\Spec A$, $Z_i = \Spec A/I_i$, so $Z_1 \cap Z_2 = \Spec A/(I_1+I_2)$.  The sequence of $A$ modules $$0 \to I_1 \cap I_2 \to A \to A/I_1 \oplus A/I_2 \to A/(I_1+I_2) \to 0 $$ is easily seen to be exact, where the last map is the difference of the projections, so it is enough to prove $I_1 \cap I_2 = (0)$.  Since the $Z_i$ cover $Z$, any prime ideal of $A$ contains either $I_1$ or $I_2$ (or both), so any $f \in I_1 \cap I_2$ must be in every prime ideal of $A$, but the intersection of the prime ideals of $A$ is its nilradical, which is zero since $Z$ is reduced. \end{proof}

For $I \subseteq [n]$, recall from \S\ref{section:Indexing sets} that $i_I : X^{I+1} \into X^n$ is the closed embedding defined by repeating the $*$ coordinate at all coordinates in $I^c$.

\begin{lem} \label{lem:gluing}  Suppose $[n] = K + L$ and $Z_K \subseteq X^{K+1}$, $Z_L \subseteq X^{L+1}$ are reduced closed subschemes, each containing $\Delta_\bullet$.  Let $Z := i_K Z_K \cup i_L Z_L$, with the reduced induced structure from $X^n$.  Then: \begin{enumerate} \item \label{1} $Z$ is a reduced closed subscheme of $X^n$ containing $\Delta_\bullet$ and $\Delta_{K,L}$. \item \label{2} The scheme-theoretic intersection $i_K Z_K \cap i_L Z_L$ is $\Delta_{K,L}$. \item \label{3} The Hilbert polynomial of $Z$ is given by \be p_Z(t_1,\dots,t_n) & = & p_{Z_K}(t_K, \sum_{l \in L} t_l) + p_{Z_L}(t_L, \sum_{k \in K} t_k) - (1+\sum_{k \in K} t_k)(1+\sum_{l \in L} t_l)\ee \end{enumerate} where, e.g., $q(t_K,\sum_{l \in L} t_l) \in \QQ[t_1,\dots,t_n]$ means the evaluation of $q \in \QQ[ \{ t_k : k \in K \} , t_*]$ at $t_* = \sum_{l \in L} t_l$.  
\end{lem}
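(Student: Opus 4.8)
The plan is to establish the three claims in order, using Lemma~\ref{lem:ses} as the main engine once we identify the right closed subschemes $Z_1, Z_2$ of $Z$.

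First I would prove (\ref{1}). That $Z$ is a reduced closed subscheme containing $\Delta_\bullet$ is immediate from the hypotheses: each $i_K Z_K$ contains $i_K \Delta_\bullet$, and as $K$ ranges over both blocks the images cover all of $\Delta_\bullet \subseteq X^n$ (any point of $\Delta_\bullet$ has at most one block of its type with more than one element, so after possibly enlarging it lies in the image of one of the two $i_I$). For $\Delta_{K,L}$: a point of $\Delta_{K,L}$ has all its $K$-coordinates equal to some value $a$ and all its $L$-coordinates equal to some value $b$; pulling back along $i_K$ (which sends the $*$-coordinate into all of $K^c = L$) we get the point of $X^{K+1}$ with all $K$-coordinates equal to $a$ and $*$-coordinate $b$ — this lies in $\Delta_\bullet \subseteq Z_K$ since it has at most two distinct coordinate values, so $\Delta_{K,L} \subseteq i_K Z_K \subseteq Z$.

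Next I would prove (\ref{2}), the scheme-theoretic identity $i_K Z_K \cap i_L Z_L = \Delta_{K,L}$. The containment $\supseteq$ follows from the argument just given applied to both embeddings. For $\subseteq$ it is cleanest to work on the level of ideals in $X^n$: the ideal of $i_K Z_K$ contains the ideal of the linear subspace $i_K(X^{K+1})$, which is generated by the $2\times 2$ minors forcing all $L$-coordinates to agree, and likewise the ideal of $i_L Z_L$ forces all $K$-coordinates to agree; together these cut out $\Delta_{K,L}$ (which is reduced, being a coordinate subspace), and one checks no further equations survive — equivalently, the scheme $\Delta_{K,L}$ maps isomorphically onto $i_K^{-1}(\Delta_{K,L}) = \Delta_\bullet \cap (\text{all } K\text{-coords equal}) \subseteq Z_K$, so the scheme-theoretic intersection can be no smaller than $\Delta_{K,L}$. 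I expect this is the main obstacle: one must be careful that $Z_K$ contributes no extra nilpotents along $\Delta_{K,L}$, which is where the hypothesis $\Delta_\bullet \subseteq Z_K$ (not just set-theoretically) and the reducedness of $Z_K$ are used.

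Finally, for (\ref{3}) I would apply Lemma~\ref{lem:ses} to $Z = (i_K Z_K) \cup (i_L Z_L)$, which is legitimate since $Z$ is reduced and these are closed subschemes covering it, giving the exact sequence
\[
0 \to \O_Z \to \O_{i_K Z_K} \oplus \O_{i_L Z_L} \to \O_{i_K Z_K \cap i_L Z_L} \to 0
\]
and hence, after twisting by $\O_{X^n}(t_1,\dots,t_n)$ and taking Euler characteristics, $p_Z = p_{i_K Z_K} + p_{i_L Z_L} - p_{i_K Z_K \cap i_L Z_L}$. Now $i_K : X^{K+1} \into X^n$ repeats the $*$-coordinate on all of $L$, so $\O_{X^n}(t_1,\dots,t_n)$ restricts to $\O_{X^{K+1}}(t_K, \sum_{l\in L} t_l)$, whence $p_{i_K Z_K}(t_1,\dots,t_n) = p_{Z_K}(t_K, \sum_{l \in L} t_l)$, and symmetrically for $L$; by (\ref{2}) and the fact that $\Delta_{K,L} \cong X^2$ with $\O(t_1,\dots,t_n)$ restricting to $\O(\sum_{k} t_k, \sum_{l} t_l)$, the last term is $(1+\sum_{k\in K} t_k)(1+\sum_{l \in L} t_l)$. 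Assembling these gives the stated formula.
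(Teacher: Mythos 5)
Your argument is correct and takes essentially the same route as the paper: both obtain (1) and (2) from the sandwich $\Delta_{K,L} \subseteq i_K Z_K \cap i_L Z_L \subseteq i_K(X^{K+1}) \cap i_L(X^{L+1}) = \Delta_{K,L}$ (the first inclusion using $\Delta_\bullet \subseteq Z_K, Z_L$, the last equality being the transverse intersection of the two multidiagonals), and then deduce (3) from Lemma~\ref{lem:ses} applied to $i_K Z_K$, $i_L Z_L$ together with the restriction identities for $\O(t_1,\dots,t_n)$ and additivity of $\chi$. One cosmetic repair: the point of $X^{K+1}$ with all $K$-coordinates equal to $a$ and $*$-coordinate $b$ lies in $\Delta_\bullet$ because it lies in $\Delta_*$ (all coordinates other than $*$ coincide), not because it has at most two distinct coordinate values --- that criterion is insufficient in general, as $(a,a,b,b)$ shows.
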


\begin{proof} For $i \in K$ we have $\Delta_i \cap i_K = \Delta_i$ and similarly for $i \in L$, so $Z$ contains $\Delta_\bullet$.  Note that $i_K \cap i_L = \Delta_{K,L}$ so certainly $i_K Z_K \cap i_L Z_L \subseteq \Delta_{K,L}$.  Note also that $i_K \Delta_\bullet = i_L \Delta_\bullet = \Delta_{K,L}$ so $\Delta_\bullet \subseteq Z_K$ implies $\Delta_{K,L} \subseteq i_K Z_K$, and similarly for $L$.  Hence $\Delta_{K,L} \subseteq i_K Z_K \cap i_L Z_L$, so we have proved \eqref{1} and \eqref{2}.  Now \eqref{3} follows from the short exact sequence in Lemma \ref{lem:ses}, the identities \be \O_{X^n}(t_1,\dots,t_n)|_{i_K} & = & \O(t_K, \sum_{l \in L} t_l) \\ \O_{X^n}(t_1,\dots,t_n)|_{i_L} & =& \O(t_L, \sum_{k \in K} t_k) \\ \O_{X^n}(t_1,\dots,t_n)|_{\Delta_{K,L}} & = & \O(\sum_{k \in K} t_k, \sum_{l \in L} t_l), \ee and the fact that $\chi$ is additive on short exact sequences. 
\end{proof}

\subsection{The subschemes $\mathcal{Z}(C,p_1,\ldots,p_n)$}

\begin{prop} \label{prop:Hilbertpolynomial} For any $(C,p_1,\ldots,p_n) \in \ov{M}_{0,n}(k)$, the Hilbert polynomial of the closed subscheme $\mathcal{Z}(C,p_1,\ldots,p_n) \subseteq X^n$ equals the Hilbert polynomial of a generic orbit closure. \end{prop}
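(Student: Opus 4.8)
The plan is to induct on the number of irreducible components of $C$, using the additivity results already established (Propositions~\ref{prop:GenHilbpoly} and the gluing formula of Lemma~\ref{lem:gluing}) together with the recursive structure of $\mathcal{Z}(C,p_1,\ldots,p_n)$. If $C$ is irreducible, then there is a single component $D \cong X$, and $\mathcal{Z}(C,p_1,\ldots,p_n)$ is the orbit closure of the component configuration for $D$; if the configuration has generic type this is exactly $\overline{Gx}$ for $x \in U_n$, and Proposition~\ref{prop:GenHilbpoly} finishes it, while if the configuration has type $P$ with $l\ge 3$ parts, one writes $\overline{Gx} = \Delta_P(\overline{Gy})$ for $y \in U_l$ and computes the Hilbert polynomial of $\Delta_P(\overline{Gy})$ by pulling back along $\Delta_P$ (as in the line bundle restriction identities used repeatedly above); a short polynomial identity shows this agrees with $\sum_{|I|\le 3}\prod_{i\in I}t_i$. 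Stability guarantees $l\ge 3$, so this base case is always covered.

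For the inductive step, pick an irreducible component $D\subseteq C$ carrying exactly one node $q$, so that $C = C_K \cup_q C_L$ decomposes the marking set as $[n] = K + L$ with, say, all the markings originally on $D$ lying in $K$. Retracting onto the two pieces and adding the node as a new marking $*$ on each side gives stable curves $(C_K, \{p_k\}_{k\in K}, q) \in \ov{M}_{0,K+1}$ and $(C_L, \{p_l\}_{l\in L}, q) \in \ov{M}_{0,L+1}$, and the key geometric observation (to be verified) is that
\[
\mathcal{Z}(C,p_1,\ldots,p_n) = i_K\,\mathcal{Z}(C_K,\{p_k\}_{k\in K},q)\ \cup\ i_L\,\mathcal{Z}(C_L,\{p_l\}_{l\in L},q),
\]
where $i_K,i_L$ are the coordinate-repeating embeddings of \S\ref{section:Indexing sets}. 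This holds because the component configuration of any component on the $C_K$-side, viewed in $X^n$, factors through $i_K$ (the markings in $L$ all retract to the node $q$, i.e.\ to the same point), and conversely. Since each $\mathcal{Z}(C_K,\ldots)$ contains $\Delta_\bullet$ (every diagonal $\Delta_i$ is the orbit closure of a suitable degenerate configuration and these are always present — or more directly, the union includes a component configuration in which the $i$-th point is separated off), Lemma~\ref{lem:gluing} applies and gives
\[
p_{\mathcal{Z}(C)}(t_1,\ldots,t_n) = p_{\mathcal{Z}(C_K)}\!\Big(t_K,\ \textstyle\sum_{l\in L}t_l\Big) + p_{\mathcal{Z}(C_L)}\!\Big(t_L,\ \textstyle\sum_{k\in K}t_k\Big) - \Big(1+\textstyle\sum_{k\in K}t_k\Big)\Big(1+\textstyle\sum_{l\in L}t_l\Big).
\]
By the inductive hypothesis each of $p_{\mathcal{Z}(C_K)}$ and $p_{\mathcal{Z}(C_L)}$ equals the generic-orbit-closure polynomial in the appropriate number of variables, namely $\sum_{|J|\le 3}\prod_{j\in J}t_j$ over subsets $J$ of $K+1$ (resp.\ $L+1$). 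It then remains to substitute $t_* = \sum_{l\in L}t_l$ (resp.\ $t_* = \sum_{k\in K}t_k$) and check that the right-hand side collapses to $\sum_{|I|\le 3, I\subseteq[n]}\prod_{i\in I}t_i$. Organizing subsets $I\subseteq[n]$ by how they split between $K$ and $L$ — purely in $K$, purely in $L$, or meeting both — one sees the "purely in $K$" and "purely in $L$" terms come from the two summands, the terms meeting both $K$ and $L$ come from the cross terms produced by expanding $(\sum_l t_l)$ and $(\sum_k t_k)$, and the subtracted product exactly cancels the overcounting of subsets of size $\le 2$ that are split (or contained in neither half beyond the constant and linear overlaps). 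This is the routine polynomial bookkeeping I would not grind through in detail.

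The main obstacle is the geometric identity $\mathcal{Z}(C) = i_K\mathcal{Z}(C_K) \cup i_L\mathcal{Z}(C_L)$ as \emph{reduced schemes}, including the scheme-theoretic intersection statement that lets Lemma~\ref{lem:gluing} apply cleanly: one must be sure that retracting $C$ onto a component lying on the $C_K$-side gives the same point of $X^n$ as first retracting onto $C_K$, re-marking with the node, and then applying $i_K$, and that no component contributes an orbit closure not accounted for on one of the two sides. This is essentially a compatibility between the retraction maps $\pi_D$ and the gluing structure of the stable curve, and also requires knowing that every relevant $\mathcal{Z}(C_K,\ldots)$ genuinely contains $\Delta_\bullet$ — which follows from Lemma~\ref{lem:boundary} applied inductively, since the boundary of any orbit closure in the union already sweeps out all the $\Delta_i$. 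Once that is in place, the Hilbert polynomial computation is forced by additivity and induction.
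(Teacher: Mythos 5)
Your proof is correct and follows essentially the same route as the paper: induction on the number of irreducible components, with the base case handled by Proposition~\ref{prop:GenHilbpoly}, the identity $\mathcal{Z}(C,p_1,\ldots,p_n)=i_K\,\mathcal{Z}(C_K,K+q)\cup i_L\,\mathcal{Z}(C_L,L+q)$ at a separating node, the containment of $\Delta_\bullet$ supplied by Lemma~\ref{lem:boundary}, and the Hilbert polynomial additivity of Lemma~\ref{lem:gluing}. One aside should be removed: in the base case you allow a non-generic type $P$ with $l\ge 3$ parts and claim $\Delta_P(\overline{Gy})$ still has the generic Hilbert polynomial; that situation never arises (an irreducible stable marked curve has distinct markings), and the claim itself is false --- for $n=4$ and $P=(\{1,2\},\{3\},\{4\})$ the Hilbert polynomial is $(1+t_1+t_2)(1+t_3)(1+t_4)$, which lacks the $t_1t_2$ (and two cubic) terms of the generic polynomial $\sum_{|I|\le 3}\prod_{i\in I}t_i$.
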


\begin{proof} We proceed by induction on the number of components of $C$.  If $C$ is irreducible, then $\mathcal{Z}(C,p_1,\ldots,p_n)$ is a generic orbit closure so the result follows from Proposition \ref{prop:GenHilbpoly}.  Assume $C = C_K \cup_{q} C_L$ is reducible and that the markings split across the two components as indicated according to a partition $[n] = K + L$ with $|K|,|L|\ge 2$.  Let $\mathcal{Z}(C_K,K+q)$ denote the union of component configuration orbit closures for the curve $C_K$ with markings $\{p_i\}_{i\in K}$ and $q$; define $\mathcal{Z}(C_L,L+q)$ analogously.  By the inductive hypothesis the Hilbert polynomials of $\mathcal{Z}(C_K, K+q) \subseteq X^{K+1}$ and $\mathcal{Z}(C_L, L+q) \subseteq X^{L+1}$ are given by the expected formula.  It is clear from the definitions that \be \mathcal{Z}(C,p_1,\ldots,p_n) & = & i_K(\mathcal{Z}(C_K, K+q)) \cup i_L(\mathcal{Z}(C_L, L+q)) \ee and from Lemma~\ref{lem:boundary} that the hypotheses of Lemma~\ref{lem:gluing} are satisfied. Thus, \be p_{\mathcal{Z}(C,p_1,\ldots,p_n)} & = & p_{\mathcal{Z}(C_K, K+q)} + p_{\mathcal{Z}(C_L, L+q)} - p_{\Delta_{K,L}} \\ & = & \sum_{I \subseteq K, |I| \leq 3}~\prod_{i \in I} t_i + \left ( \sum_{l \in L} t_l \right ) \sum_{I \subseteq K, |I| \leq 2}~\prod_{i \in I} t_i \\ & & + \sum_{I \subseteq L, |I| \leq 3}~\prod_{i \in I} t_i + \left ( \sum_{k \in K} t_k \right ) \sum_{I \subseteq L, |I| \leq 2}~\prod_{i \in I} t_i \\ & & - (1 + \sum_{k \in K} t_k)(1+ \sum_{l \in L} t_l ) \\ & = & \sum_{I \subseteq [n], |I| \leq 3}~\prod_{i \in I} t_i . \ee as claimed. \end{proof} 

\section{Proof of Theorem \ref{thm:Zflat}}\label{section:Proof of Zflat}

By the results of the previous section (Proposition \ref{prop:Hilbertpolynomial}) the subschemes $\mathcal{Z}(C,p_1,\ldots,p_n)$ all have the same Hilbert polynomial, so to show that they form a flat family we just need to show that they fit together into an algebraic family.  We will do this by constructing the family over $M_{0,n}$ first and then showing that it extends uniquely to a flat family over $\ov{M}_{0,n}$ which must in fact be the desired family.

\subsection{The family over the interior}

For each $I = \{ i,j,k \} \subseteq [n]$ there is a unique identification of the universal curve over $M_{0,n}$ with $M_{0,n} \times X$ so that $p_i,p_j,p_k$ become the constant sections $0,1,\infty$.  After making this identification, the sections $(p_1,\ldots,p_n)$ can be viewed as a map \[p_I : M_{0,n} \rightarrow U_n.\]  Another three-element subset of the markings yields three disjoint sections which necessarily differ from the constant sections $0,1,\infty$ by some $g \in \PGL_2(M_{0,n})$.  In particular, the $G$-orbit $Z^\circ \subseteq M_{0,n} \times U_n$ of the graph of $p_I$ is independent of the choice of $I$.  Let \[Z\subseteq M_{0,n} \times X^n\] be the closure of $Z^\circ$, with reduced induced structure.  

\begin{lem}\label{lem:openflat} We have $Z = Z^\circ \cup (M_{0,n} \times \Delta_\bullet)$, and it is flat over $M_{0,n}$ with fiber $\mathcal{Z}(C,p_1,\ldots,p_n)$ over $(C,p_1,\ldots,p_n) \in M_{0,n}$. \end{lem}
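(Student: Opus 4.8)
The plan is to determine $Z$ first as a topological space, then as a scheme along its fibres, and finally to read off flatness from the Hilbert polynomial of a generic orbit closure already computed in Proposition~\ref{prop:GenHilbpoly}. The first task is the identity $Z = Z^\circ \cup (M_{0,n}\times\Delta_\bullet)$. Since $Z^\circ$ is stable under the action $g\cdot(m,y)=(m,gy)$ of $G$, so is its closure $Z$; hence for each $(C,p_1,\ldots,p_n)\in M_{0,n}$, with component configuration $x\in U_n$, the fibre of $Z$ over that point is a closed $G$-stable subset of $X^n$ containing the orbit $Gx$, so by Lemma~\ref{lem:boundary} it contains $\overline{Gx}=Gx\cup\Delta_\bullet$; this gives $\supseteq$. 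For $\subseteq$ I would take $(m_0,y_0)\in Z$ with $y_0\notin\Delta_\bullet$ and choose a map from the spectrum of a discrete valuation ring to $Z$ carrying the closed point to $(m_0,y_0)$ and the generic point into $Z^\circ$; over the generic point this has the form $(m,g\cdot p_I(m))$ for a varying $g\in G$, with $p_I$ the section introduced above. Since $y_0$ has three distinct coordinates $i,j,k$, and $p_I(m)\in U_n$ has all coordinates distinct, and $G$ acts simply transitively on $U_3$ (Lemma~\ref{lem:torsor}), the element $g$ is forced to be the unique one carrying the triple $(p_I(m))_{ijk}$ to the corresponding triple of $y$; as both triples stay in $U_3$ near the closed point, $g$ extends there, so $y_0=g_0\cdot p_I(m_0)\in G\,p_I(m_0)$ and $(m_0,y_0)\in Z^\circ$. (The footnote to Proposition~\ref{prop:Chowclass} indicates how to phrase this limit argument algebraically.)

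Next I would identify the scheme-theoretic fibres. By the decomposition and Lemma~\ref{lem:boundary}, the fibre of $Z\to M_{0,n}$ over $(C,p_1,\ldots,p_n)$ has support $\overline{Gx}=\mathcal{Z}(C,p_1,\ldots,p_n)$, which for smooth $C$ is a single orbit closure; the claim is that it is this orbit closure with its reduced structure. It is generically reduced, indeed smooth, because over the open orbit $Z$ coincides with $Z^\circ\cong M_{0,n}\times G$, so the content is to exclude embedded structure along $\Delta_\bullet$. The cleanest route is to present $Z$ as the scheme-theoretic intersection, over all four-element $I\subseteq[n]$, of the pullbacks of the $n=4$ family under the maps $(s_I,\pi_I)\colon M_{0,n}\times X^n\to M_{0,4}\times X^4$; on each fibre this intersection is $\bigcap_{|I|=4}\pi_I^{-1}(\overline{G\,\pi_I(x)})$, cut out by the cross-ratio forms $f_{\pi_I(x)}$, and by \S\ref{section:Multi-homogeneous forms} together with the proof of Proposition~\ref{prop:GenHilbpoly} this is exactly the reduced orbit closure $\overline{Gx}$.

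For flatness, the morphism $Z\to M_{0,n}$ is projective and $M_{0,n}$ is integral, so it suffices that the (multigraded) Hilbert polynomial of its fibres be constant. By the previous step every fibre is a generic orbit closure in $X^n$, and by Proposition~\ref{prop:GenHilbpoly} all of these have Hilbert polynomial $\sum_{|I|\le 3}\prod_{i\in I}t_i$; hence $Z$ is flat over $M_{0,n}$ with fibre $\mathcal{Z}(C,p_1,\ldots,p_n)$ over $(C,p_1,\ldots,p_n)$. Equivalently, the intersection constructed above is flat over the reduced irreducible base $M_{0,n}$ with geometrically reduced irreducible fibres, hence is itself reduced and irreducible of dimension $\dim M_{0,n}+3$, so it must coincide with $\overline{Z^\circ}=Z$, and both assertions of the lemma fall out at once.

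I expect the main obstacle to be the reducedness of the scheme-theoretic fibres along the boundary $\Delta_\bullet$ of the orbit closures, i.e.\ the absence of embedded components of $Z$ over that locus; everything else is bookkeeping with the $G$-action and the Hilbert-polynomial formula. Since the orbit closures $\overline{Gx}$ need not be Cohen--Macaulay, one cannot simply invoke miracle flatness, and the alternative of combining generic flatness with a semicontinuity argument runs into the usual subtleties of cohomology and base change for a non-flat family. Expressing $Z$ through the cross-ratio forms, as in the second step, is what resolves this cleanly, with the bonus that it produces flatness simultaneously.
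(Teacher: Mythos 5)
Your argument for the containment $Z \subseteq Z^\circ \cup (M_{0,n}\times \Delta_\bullet)$ has a genuine gap: you assert that $y_0\notin\Delta_\bullet$ forces $y_0$ to have three pairwise distinct coordinates, but $\Delta_\bullet$ only consists of points where some $n-1$ coordinates coincide. A configuration taking exactly two values, each at least twice --- e.g.\ $y_0=(0,0,\infty,\infty)$ for $n=4$, or $y_1=y_2=0,\ y_3=\cdots=y_n=\infty$ in general --- lies outside $\Delta_\bullet$ yet contains no triple of distinct coordinates, so your DVR argument (which pins down $g$ by where it sends a triple staying in $U_3$) says nothing about such points. These are exactly the points one must show are \emph{not} in $Z$, and that needs a separate idea. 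The paper handles them automatically by reducing to $n=4$: it identifies $Z_4$ with the incidence variety $\{(f,x):f(x)=0\}$ over $M_{0,4}\subseteq \PP\H^0(X^4,\O(1,1,1,1))$ and, given $y\notin\Delta_\bullet$, projects by $s_I\times\pi_I$ for a four-element $I$ with $\pi_I(y)\notin\Delta_\bullet$; since $f_x$ does not vanish at two-two configurations for $x\in U_4$ (equivalently, by Lemma~\ref{lem:boundary} the fibres of $Z_4$ contain no such points), the bad case is excluded, and taking $I$ to contain three indices where $y$ is distinct plus one more index recovers $y\in Gp(m)$ coordinate by coordinate. Alternatively your valuative argument can be patched: for $y_0$ of two-two type choose $i,i'$ in one block and $j,j'$ in the other; over the DVR the cross-ratio of $(y_i,y_{i'},y_j,y_{j'})$ equals that of $(p_i,p_{i'},p_j,p_{j'})(m)$, which specializes into $\PP^1\setminus\{0,1,\infty\}$ because $m_0\in M_{0,n}$, whereas at $y_0$ the cross-ratio morphism is defined and takes the value $\infty$ --- a contradiction. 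As written, though, the containment is unproven in precisely the nontrivial case.

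There is also a weaker issue in your treatment of the scheme-theoretic fibres: you claim that the forms $f_{\pi_I(x)}\circ\pi_I$, $|I|=4$, cut out the \emph{reduced} orbit closure $\overline{Gx}$ for $x\in U_n$, citing \S\ref{section:Multi-homogeneous forms} and the proof of Proposition~\ref{prop:GenHilbpoly}. Those sources give the $n=4$ case (one form of multidegree $(1,1,1,1)$, hence a reduced hypersurface) and, for general $n$, only an assertion used inside the degeneration; no proof that the ideal generated by these forms is radical appears there. Since radicality along $\Delta_\bullet$ is exactly the ``absence of embedded structure'' you identify as the main obstacle, your proposal relocates that obstacle into an unsupported citation rather than resolving it. To be fair, the paper's own flatness step is also brief --- it proves the set-theoretic description by the $n=4$ reduction above and then invokes constancy of the Hilbert polynomial via Proposition~\ref{prop:GenHilbpoly} over the integral base --- but it does not rest on the false trichotomy, and the first gap is the one that genuinely needs repair before your write-up establishes the lemma.
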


\begin{proof} Clearly $Z$ contains each $M_{0,n} \times \Delta_i$ because for every point $x \in M_{0,n}$ the closure of $Z^\circ_x$ in the fiber over $x$ contains $\Delta_i$ by Lemma~\ref{lem:boundary}.  To see that no other points are in $Z$ we first consider $n=4$.  Let  $Z_4 \subseteq M_{0,4} \times X^4$ be the same definition as $Z$ but in the case $n=4$.  Identify $M_{0,4}$ with the space of multi-homogeneous forms as in \S\ref{section:crossratio}: $$ M_{0,4} = \PP^1 \setminus \{ 0,1,\infty \} \subseteq \PP \H^0( X^4, \O(1,1,1,1) ). $$ Then $Z_4 = \{ (f,x) : f(x)=0 \}$ and this has the claimed description by the discussion in \S\ref{section:crossratio} of the zero loci of these $f$.  For arbitrary $n$, note that if $x \in X^n$ is not in any of the $\Delta_i$ then there is $I \subseteq [n]$ of size four such that $\pi_I(x)$ is not in any of the $\Delta_i \subseteq X^I$.  The result then follows from the fact that the diagram $$ \xym@C+20pt{ M_{0,n} \times X^n \ar[r]^-{s_I \times \pi_I} \ar[d] & M_{0,4} \times X^4 \ar[d] \\ M_{0,n} \ar[r]^-{s_I} & M_{0,4} }$$ is commutative and $G$-equivariant and the top arrow takes $Z$ onto $Z_4$.  This proves the claimed description of $Z$; flatness follows since the $Z(C,p_1,\ldots,p_n)$ all have the same Hilbert polynomial by Proposition~\ref{prop:GenHilbpoly}. \end{proof}

It follows from this lemma that there is an induced morphism $Z : M_{0,n} \rightarrow \Hilb(X^n)$ sending a configuration to its orbit closure.  This is the inclusion from the definition of the Hilbert quotient---see (\ref{eq:HilbIncl}) in \S\ref{intsec:Hilbert quotient}.  Our goal then is to show that $Z$ extends to a morphism $\ov{Z} : \ov{M}_{0,n} \rightarrow \Hilb(X^n)$ such that the image of $(C,p_1,\ldots,p_n)$ is the subscheme $\mathcal{Z}(C,p_1,\ldots,p_n)$ for all curves, not just smooth ones.

\subsection{Interlude on extensions and DVRs}

We include here a general result about DVRs and extending morphisms that will be subsequently applied to our specific situation.

\begin{definition}
Let $(A,\m)$ be a DVR with residue field $k$ and fraction field $K$, and let $Y$ be a proper scheme.  By the valuative criterion, any map $g : \Spec K \to Y$ extends to a map $\ov{g} : \Spec A \to Y$.  We write $\lim g$ for the point $\ov{g}(\m) \in Y$.
\end{definition}

\begin{thm}\label{thm:mapextension} 
Suppose $X_1,X_2$ are proper schemes over a noetherian scheme $S$ with $X_1$ normal.  Let $U \subseteq X_1$ be an open dense set and $f : U \to X_2$ an $S$-morphism.  Then $f$ extends to an $S$-morphism $\ov{f} : X_1 \to X_2$ if, and only if, for any DVR $(A,\m)$ as above and any morphism $g : \Spec K \to U$, the point $\lim fg$ of $X_2$ is uniquely determined by the point $\lim g$ of $X_1$.  
\end{thm}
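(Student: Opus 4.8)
The \emph{only if} direction is formal. Suppose $\ov f\colon X_1\to X_2$ extends $f$, let $(A,\m)$ be a DVR with fraction field $K$, and let $g\colon\Spec K\to U$ be a morphism. Since $X_1$ is separated over $S$ the extension $\ov g\colon\Spec A\to X_1$ is unique, and $\ov f\circ\ov g$ is an extension of $fg$; by uniqueness (separatedness of $X_2$ over $S$) it is \emph{the} extension, so $\lim fg=\ov f(\ov g(\m))=\ov f(\lim g)$ depends only on $\lim g$. For the converse, assume the valuative condition holds. Since $X_1$ is normal and noetherian it is a finite disjoint union of its irreducible components, each open and integral, and $U$ is dense in each; so we may assume $X_1$ integral with function field $K$, and then its generic point $\eta$ lies in $U$. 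Let $\Gamma\subseteq X_1\times_S X_2$ be the closure of the graph $\Gamma_f$ of $f$, with its reduced induced structure, and let $p\colon\Gamma\to X_1$ and $q\colon\Gamma\to X_2$ be the two projections. Then $\Gamma$ is integral (the closure of $\Gamma_f\cong U$); $p$ is proper, since $\Gamma$ is closed in $X_1\times_S X_2$, which is proper over $X_1$ (as $X_2$ is proper over $S$); and $p$ restricts to an isomorphism $p^{-1}(U)\xrightarrow{\,\sim\,}U$ under which $q$ corresponds to $f$. In particular $p$ is birational, and being proper and dominant it is surjective. Write $\xi$ for the generic point of $\Gamma$, so $p(\xi)=\eta$ and $\xi\in p^{-1}(U)$.

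The crux is the claim that $q$ is constant on every fibre $p^{-1}(x)$. We may take $x\ne\eta$, since $p^{-1}(\eta)=\{\xi\}$. Suppose $\gamma_1,\gamma_2\in p^{-1}(x)$ satisfy $q(\gamma_1)\ne q(\gamma_2)$ in $X_2$. Since $\Gamma$ is integral with generic point $\xi$ and $\gamma_i\ne\xi$ (because $p(\gamma_i)=x\ne\eta$), each local ring $\O_{\Gamma,\gamma_i}$ is a noetherian local domain and hence is dominated by a DVR $A_i$; this gives a morphism $h_i\colon\Spec A_i\to\Gamma$ sending the generic point to $\xi$ and the closed point to $\gamma_i$. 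Put $g_i:=(p\circ h_i)|_{\Spec K_i}\colon\Spec K_i\to U$, where $K_i$ is the fraction field of $A_i$; this is the inclusion of the generic point $\eta$ (through a field extension $K\into K_i$). Because $X_1$ and $X_2$ are separated over $S$, $p\circ h_i$ is the unique extension of $g_i$ to $X_1$ and $q\circ h_i$ is the unique extension of $fg_i$ to $X_2$; hence $\lim g_i=p(\gamma_i)=x$ for $i=1,2$, while $\lim fg_i=q(\gamma_i)$ takes two distinct values. This contradicts the hypothesis that the point $\lim fg\in X_2$ is determined by the point $\lim g\in X_1$. Thus $q$ is constant on $p$-fibres, and $x\mapsto q(p^{-1}(x))$ defines a set map $\ov f\colon|X_1|\to|X_2|$ with $\ov f|_U=f$.

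It remains to upgrade $\ov f$ to an $S$-morphism, and this is the only place normality is used. First, $p_*\O_\Gamma=\O_{X_1}$: for an affine open $\Spec R\subseteq X_1$ the ring $B:=\O_\Gamma(p^{-1}(\Spec R))$ is a domain ($\Gamma$ integral), is module-finite over $R$ ($p$ proper, so $p_*\O_\Gamma$ is coherent), and satisfies $R\subseteq B\subseteq K$ (restrict to the nonempty open $\Spec R\cap U$); since $R$ is integrally closed in $K$, $B=R$. Next, for an affine open $V=\Spec C\subseteq X_2$ the preimage $q^{-1}(V)$ is a union of $p$-fibres, hence open and $p$-saturated, so $q^{-1}(V)=p^{-1}(W_V)$ with $W_V:=p(q^{-1}(V))$ open ($p$ closed and surjective); the $W_V$ cover $X_1$. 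On $W_V$ the ring map $C\to\O_\Gamma(q^{-1}(V))=\O_\Gamma(p^{-1}(W_V))=\O_{X_1}(W_V)$ (using $p_*\O_\Gamma=\O_{X_1}$) defines a morphism $W_V\to V\into X_2$ with underlying map $\ov f|_{W_V}$; these glue, since $X_1$ is reduced, $X_2$ is separated over $S$, and they agree with $f$ on the dense $W_V\cap W_{V'}\cap U$, to a morphism $\ov f\colon X_1\to X_2$ extending $f$, which is an $S$-morphism as it is built from the $S$-morphisms $p$ and $q$. I expect the principal obstacle to be the fibrewise-constancy claim — extracting from two points of a single $p$-fibre a pair of DVR-valued points with a common limit in $X_1$ but distinct limits in $X_2$ — and, secondarily, the identity $p_*\O_\Gamma=\O_{X_1}$, which is where the normality of $X_1$ genuinely enters.
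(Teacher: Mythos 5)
Your proof is correct, and its second half takes a genuinely different route from the paper's. Both arguments start identically: form the closure $\Gamma$ of the graph in $X_1\times_S X_2$ and use the valuative hypothesis via DVR-valued points of $\Gamma$ whose generic point lies over $U$ (you produce these by dominating the local rings $\O_{\Gamma,\gamma_i}$ of the integral scheme $\Gamma$ by DVRs of its function field, whereas the paper invokes EGA II.7.1.9 to find DVR points of the dense subset $\Gamma_f$ specializing to given points; both are valid). The divergence is in how the extension is then obtained. The paper shows the first projection $\pi_1:\Gamma\to X_1$ is an isomorphism: it proves two auxiliary lemmas (a proper morphism with finite closed-point fibers is finite; an infinite fiber of $\pi_1$ inside the graph closure must contain two points with distinct $\pi_2$-images), derives a contradiction with the valuative hypothesis if $\pi_1$ is not finite, and then uses that a finite birational morphism onto the normal $X_1$ is an isomorphism, setting $\ov f=\pi_2\pi_1^{-1}$. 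You instead prove the stronger statement that $\pi_2$ is constant on \emph{every} $\pi_1$-fiber and then descend $\pi_2$ along $\pi_1$ directly, using $\pi_{1*}\O_\Gamma=\O_{X_1}$ (where normality enters for you) to define the morphism on the open, $\pi_1$-saturated sets $W_V$. This buys you independence from the two fiber-finiteness lemmas and from the finite-birational-onto-normal step, at the cost of the Stein-factorization-style descent; the paper's route keeps the scheme-theoretic construction trivial ($\ov f=\pi_2\pi_1^{-1}$) but needs the finiteness bookkeeping. One small point to tighten: in the gluing step you appeal to separatedness of $X_2$ over $S$ and agreement with $f$ on a dense open before you have verified that the local maps $\ov f_V$ are $S$-morphisms (which is what that separatedness argument requires). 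This is easily repaired from your own construction: since $\ov f_V\circ \pi_1=\pi_2$ on $\pi_1^{-1}(W_V)$, with $\pi_1$ surjective and $\O_{W_V}\to\pi_{1*}\O_{\pi_1^{-1}(W_V)}$ an isomorphism, each $\ov f_V$ is forced to be an $S$-morphism and any two candidates agreeing after composition with $\pi_1$ coincide, so the pieces glue without further appeal to density.
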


\begin{proof} It follows from continuity that an extension implies the statement about DVRs, so we consider the converse.  Let $\Gamma_f \subseteq U \times_S X_2$ be the graph of $f$, so the projection $\pi^\circ_1 : \Gamma_f \to U$ is an isomorphism.  Let $\ov{\Gamma}_f \subseteq X_1 \times_S X_2$ be the closure of $\Gamma_f$, and let $\pi_i : \ov{\Gamma}_f \to X_i$ be the projections.  To complete the proof it is enough to show that $\pi_1$ is an isomorphism, since then $\ov{f} := \pi_2 \pi_1^{-1}$ will be the desired extension.  

Note that $\ov{\Gamma_f}$ contains $\Gamma_f$ as an open dense subset so $\pi_1$ is birational.  The $X_i$ are proper over $S$, so $\pi_1$ is also proper, and $X_1$ is normal, so to conclude that $\pi_1$ is an isomorphism it is enough to show that it is a finite morphism.  If $\pi_1$ were not finite, then by Lemma~\ref{lem:finite}, below, we could find a closed point $x \in X_1$ with $\pi_1^{-1}(x)$ infinite.  By Lemma~\ref{lem:points}, below, we could then find points $y,y' \in \pi_1^{-1}(x)$ with $\pi_2(y) \neq \pi_2(y')$.  Since $\Gamma_f \subseteq \ov{\Gamma}_f$ is dense, by \cite[II.7.1.9]{EGA} we can find DVRs $(A,\m,k,K)$, $(A',\m',k',K')$, and maps $g : \Spec K \to \Gamma_f$, $g' : \Spec K' \to \Gamma_f$ with $\lim g = y$, $\lim g' = y'$.  Now $\lim \pi^\circ_1 g = \pi_1(g(\m)) = \pi_1(y)=x$ and similarly $\lim \pi^\circ_1 g' = x$.  But $\lim f \pi^\circ_1 g = \lim \pi_2 g = \pi_2(y)$, and similarly $\lim f \pi^\circ_1 g' = \pi_2(y')$, which contradicts the hypothesis. \end{proof}

\begin{lem} \label{lem:finite} Let $X_1$ be a noetherian scheme and $f : X_1 \to X_2$ a proper morphism.  Then $f$ is finite if and only if $f^{-1}(x)$ is finite over $k(x)$ for every closed point $x \in X_2$. \end{lem}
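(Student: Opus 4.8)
The plan is to prove the nontrivial implication; the other is immediate, since a finite morphism is affine with $f_*\O_{X_1}$ a finite $\O_{X_2}$-algebra, so each fiber $f^{-1}(x)$ is affine and finite over $k(x)$, hence a finite set. For the converse I would proceed in two classical steps: first deduce from the hypothesis on fibers over closed points that $f$ is quasi-finite (every fiber finite), and then invoke Zariski's main theorem to upgrade ``proper and quasi-finite'' to ``finite''.

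For the first step, consider the locus $E := \{ x_1 \in X_1 : \dim_{x_1} f^{-1}(f(x_1)) \ge 1 \}$. Since $f$ is of finite type (being proper) and $X_1$ is noetherian, $E$ is closed in $X_1$ by upper semicontinuity of fiber dimension, e.g.\ \cite[IV.13.1.3]{EGA}. As $f$ is proper it is a closed map, so $f(E)$ is closed in $X_2$; moreover $E$, being closed in the noetherian scheme $X_1$, is quasi-compact, hence so is $f(E)$. If $f(E)$ were nonempty it would contain a point $x$ that is closed in $f(E)$ --- a minimal nonempty closed subset of a sober space is a one-point set --- and such an $x$ is then closed in $X_2$, because a point closed in a closed subscheme is closed in the ambient scheme. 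Choosing $x_1 \in E$ with $f(x_1) = x$, the fiber $f^{-1}(x)$ has dimension $\ge 1$ and is of finite type over the field $k(x)$; such a scheme contains an integral subscheme of positive dimension and hence has infinitely many points, contradicting the hypothesis that $f^{-1}(x)$ is finite over $k(x)$. Therefore $E = \emptyset$: every fiber of $f$ is zero-dimensional and of finite type over a field, so $f$ is quasi-finite.

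For the second step I would use Zariski's main theorem in the form that a separated quasi-finite morphism factors as $f = g \circ j$ with $j : X_1 \into \ov{X_1}$ an open immersion and $g : \ov{X_1} \to X_2$ finite (see \cite[IV.8.12.6]{EGA}). Since $g$ is separated and $g \circ j = f$ is proper, the immersion $j$ is proper; but a proper open immersion has closed image, so $j$ realizes $X_1$ as an open-and-closed subscheme of $\ov{X_1}$, whence $f = g|_{X_1}$ is finite.

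I expect the substance of the argument to lie in the two cited inputs --- semicontinuity of fiber dimension and Zariski's main theorem --- together with the small topological point that produces a genuine closed point of $X_2$ inside $f(E)$ (so that the finiteness hypothesis can be applied) and the elementary fact that a positive-dimensional scheme of finite type over a field is infinite. Everything else is formal; in the application of this lemma $X_2$ is itself noetherian, so no finite-presentation subtleties arise. An alternative to the first step would route through Chevalley's constructibility theorem applied to $f(E)$, but the semicontinuity statement seems cleanest.
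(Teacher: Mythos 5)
Your proposal is correct and takes essentially the same route as the paper: reduce to quasi-finiteness via upper semicontinuity of fiber dimension together with the existence of a closed point of $X_2$ in the closed locus of bad fibers, then conclude from the standard fact that a proper quasi-finite morphism is finite. The only differences are cosmetic---you apply semicontinuity on the source and push forward by the closed map $f$ (and are, if anything, more careful than the paper about producing a closed point without assuming $X_2$ noetherian), whereas the paper cites the base version EGA IV.13.1.5 directly, and you spell out the Zariski's-main-theorem factorization that the paper simply invokes.
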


\begin{proof} One direction is clear.  To see the converse, note that a proper quasi-finite morphism is finite, so it suffices to show that $f^{-1}(x)$ is finite over $k(x)$ for every $x \in X_2$.  Since $f$ is finite type, $f^{-1}(x)$ is finite over $k(x)$ if and only if $f^{-1}(x)$ is zero-dimensional.  By upper semi-continuity of fiber dimension \cite[IV.13.1.5]{EGA}, the set $T$ of $x \in X_2$ where $\dim f^{-1}(x) > 0$ is closed.  But $X_2$ is a Zariski space, so if $T$ were non-empty then it would contain a closed point. \end{proof}

\begin{lem} \label{lem:points} Let $f_1 : X_1 \to S$ be a proper morphism of noetherian schemes, and let $f_2 : X_2 \to S$ be a morphism of locally finite type.  Let $Y \subseteq X_1 \times_S X_2$ be a closed subset and $\pi_i : Y \to X_i$ the projections.  Then for any closed point $x \in X_1$, if $\pi_1^{-1}(x)$ is not finite then there are points $y,y' \in \pi_1^{-1}(x) $ with $\pi_2(y) \neq \pi_2(y')$. \end{lem}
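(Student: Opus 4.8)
The plan is to prove the contrapositive: assuming $\pi_2$ is constant on $\pi_1^{-1}(x)$, I will show that $\pi_1^{-1}(x)$ is a finite set. We may assume $\pi_1^{-1}(x)\neq\emptyset$; fix a point $y_0$ in it and set $z:=\pi_2(y_0)$, so that every $w\in\pi_1^{-1}(x)$ satisfies $\pi_2(w)=z$. Both $x$ and $z$ then lie over the common point $s:=f_1(x)=f_2(z)\in S$, and the whole argument takes place inside the fibre of $\pi_1$ over $x$.

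First I would pin down $s$ and the residue field $k(x)$. Since $f_1$ is proper it is a closed map, so the image $\{f_1(x)\}$ of the closed point $x$ is closed in $S$; thus $s$ is a \emph{closed} point of $S$. Consequently the fibre $X_{1,s}$ is a closed subscheme of $X_1$, it is of finite type over $k(s)$ (as $f_1$ is), and $x$ is a closed point of it; by Zariski's lemma the extension $k(x)/k(s)$ is therefore finite.

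Next I would show that $z$ is likewise a closed point of $X_{2,s}$. The fibre $\pi_1^{-1}(x)$ equals $Y\cap(\{x\}\times_S X_2)$ and so is closed in $\{x\}\times_S X_2\cong X_{2,s}\times_{k(s)}k(x)$. The second projection $p\colon X_{2,s}\times_{k(s)}k(x)\to X_{2,s}$ is the base change of the finite morphism $\Spec k(x)\to\Spec k(s)$, hence is finite and in particular closed; and on $\pi_1^{-1}(x)$ the map $\pi_2$ factors as this projection $p$ followed by the inclusion $X_{2,s}\hookrightarrow X_2$. Therefore $\{z\}=p(\pi_1^{-1}(x))$ is closed in $X_{2,s}$, i.e.\ $z$ is a closed point there. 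Since $X_{2,s}$ is locally of finite type over $k(s)$ (as $f_2$ is locally of finite type), Zariski's lemma again gives that $k(z)/k(s)$ is finite. Now $k(x)\otimes_{k(s)}k(z)$ is a finite-dimensional $k(x)$-algebra, hence Artinian, so $\{x\}\times_S\{z\}=\Spec\bigl(k(x)\otimes_{k(s)}k(z)\bigr)$ has only finitely many points; and $\pi_1^{-1}(x)\subseteq\{x\}\times_S\{z\}$ because $\pi_1$ is constantly $x$ and $\pi_2$ is constantly $z$ on it. Hence $\pi_1^{-1}(x)$ is finite, proving the contrapositive.

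The one delicate point — and the reason the hypotheses are exactly right — is that $z$ could a priori be a \emph{non-closed} point of $X_2$, in which case $\{x\}\times_S\{z\}$ need not be finite. This is precisely ruled out in the previous step by the closedness of the finite projection $p$, which in turn rests on $k(x)/k(s)$ being finite, hence on $x$ being a closed point and $f_1$ being proper (so that its image $s$ is forced to be closed in $S$). The remaining ingredients — the identification $\{x\}\times_S X_2\cong X_{2,s}\times_{k(s)}k(x)$ and the two invocations of Zariski's lemma for closed points of (locally) finite-type schemes over a field — are routine.
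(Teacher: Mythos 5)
Your proof is correct; every step checks out (the identification of the set of points of $X_1\times_S X_2$ over $x$ with the space of $X_{2,s}\times_{k(s)}k(x)$, the finiteness and hence closedness of the projection $p$ once $k(x)/k(s)$ is known to be finite, and the final count of points of $\Spec\bigl(k(x)\otimes_{k(s)}k(z)\bigr)$, which is Artinian because $k(z)/k(s)$ is finite). It shares its endgame with the paper's proof --- both arguments conclude by observing that $\pi_1^{-1}(x)$ lies in the finite set of points lying over $x$ and over the single image point, i.e.\ in $\Spec\bigl(k(x)\otimes_{k(s)}k(z)\bigr)$ --- but the middle mechanism is genuinely different. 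The paper uses properness of $f_1$ directly: the base-changed projection $X_1\times_S X_2\to X_2$ is proper, so the image of the closed set $\pi_1^{-1}(x)$ is a closed \emph{point of $X_2$}, and then the finiteness of $k(z)/k(s)$ comes from the Nullstellensatz over a general base ring (a field of finite type over a ring is finite over the residue field at the kernel), since $z$ is a priori only of finite type over an affine ring of $S$, not over a field. You instead never show $z$ is closed in $X_2$: you first apply Zariski's lemma in the fibre $X_{1,s}$ to get $k(x)/k(s)$ finite (using that $x$ is closed and $f_1$ is of finite type), use this to make $p$ finite and hence closed, conclude that $z$ is closed only in the fibre $X_{2,s}$, and apply Zariski's lemma a second time over the field $k(s)$. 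What this buys is that only the classical Nullstellensatz over a field is invoked, and properness of $f_1$ enters only through its finite-typeness (the closedness of $s$ in your first step is not actually needed, since $x$ closed in $X_1$ is automatically closed in the subspace $f_1^{-1}(s)$); so your argument in fact proves the lemma with $f_1$ merely locally of finite type. The paper's route is shorter but leans on properness and the relative Nullstellensatz at its key step. Your flagged ``delicate point'' --- that $z$ need not be closed in $X_2$ --- is exactly the issue the paper resolves by properness and you resolve by working fibrewise; note only that the map $X_{2,s}\to X_2$ you call an inclusion is in general just a homeomorphism onto the fibre, which is all your argument uses.
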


\begin{proof} Suppose, to the contrary, that $\pi_2( \pi_1^{-1}(x) )$ consists of a single point $y$.  Since $f_1$ is proper and $x$ is closed, $\pi_2(\pi_1^{-1}(x))$ is a closed subscheme of $X_2$.  Set $s = f_2(y) = f_1(x) \in S$.  Since $f_2$ is locally of finite type, so is its restriction to the closed subscheme $\pi_2(\pi_1^{-1}(x))$ whose underlying topological space is the single point $y$. Thus the residue field $k(y)$ must be a finite field extension of $k(s)$.  By \cite[I.3.4.9]{EGA}, the set $T$ of points of $X_1 \times_S X_2$ mapping to $x$ via the first projection and $y$ via the second is in bijective correspondence with the set of points of $\Spec k(x) \otimes_{k(s)} k(y)$, so $T$ is finite since $k(s) \into k(y)$ is a finite field extension.  But clearly $\pi_1^{-1}(x)$ is contained in $T$, so this contradicts the assumption that $\pi_1^{-1}(x)$ is infinite. \end{proof}

\subsection{Extending the flat family}

We would like to apply Theorem \ref{thm:mapextension} to the morphism $Z : M_{0,n} \rightarrow \Hilb(X^n)$, to obtain an extension $\ov{Z} : \ov{M}_{0,n} \rightarrow \Hilb(X^n)$.  To do this, we need to check that given a DVR $(A,\m,k,K)$ and a $K$-point $(C_K,p_1,\ldots,p_n)\in M_{0,n}(K)$ the flat limit of the subscheme $\mathcal{Z}(C_K,p_1,\ldots,p_n)\in\Hilb(X^n)(K)$ is the subscheme $\mathcal{Z}(C_k,p_1,\ldots,p_n)\in\Hilb(X^n)(k)$ constructed from the curve $(C_k,p_1,\ldots,p_n)$ which is the flat limit of $(C_K,p_1,\ldots,p_n)$.  In fact, not only will this abstractly yield an extension by Theorem \ref{thm:mapextension} but it explicitly shows that the extension is given by $(C,p_1,\ldots,p_n) \mapsto \mathcal{Z}(C,p_1,\ldots,p_n)$ and hence that the $\mathcal{Z}(C,p_1,\ldots,p_n)$ form a flat family over $\ov{M}_{0,n}$, thereby completing the proof of Theorem \ref{thm:Zflat}.

It follows from \cite[Lemma 3.2]{Gi} that \[\mathcal{Z}(C_k,p_1,\ldots,p_n)\subseteq 
\lim \mathcal{Z}(C_K,p_1,\ldots,p_n) \in \Hilb(X^n)(k).\]  But the definition of flat limit implies that $\lim \mathcal{Z}(C_K,p_1,\ldots,p_n)$ has the same Hilbert polynomial as $\mathcal{Z}(C_K,p_1,\ldots,p_n)$, and by Proposition \ref{prop:Hilbertpolynomial} we know that $\mathcal{Z}(C_K,p_1,\ldots,p_n)$ has the same Hilbert polynomial as $\mathcal{Z}(C_k,p_1,\ldots,p_n)$, so this inclusion is in fact an equality.
  
\section{Proof of Proposition \ref{prop:det}}\label{Proof of det} 

Fix $I\subseteq [n]$ of cardinality four and consider the projection \[\pi := (\pi_I,\text{id}) : X^n \times (X^n\HilbQ G) \rightarrow X^4 \times (X^n\HilbQ G).\]  Let $Y\subseteq X^n \times (X^n\HilbQ G)$ be the restriction of the universal family over $\Hilb(X^n)$ to $X^n\HilbQ G \subseteq \Hilb(X^n)$.  Applying the derived direct image functor  $R\pi_*$ to the restriction map $\mathcal{O}_{X^n \times (X^n\HilbQ G)} \rightarrow \mathcal{O}_Y$ yields \[R\pi_*\mathcal{O}_{X^n \times (X^n\HilbQ G)} \rightarrow R\pi_*\mathcal{O}_Y.\]  The fibers of $\pi$ are products of $\PP^1$ so $R\pi_*\mathcal{O}_{X^n \times (X^n\HilbQ G)}=\mathcal{O}_{X^4 \times (X^n\HilbQ G)}$.  We claim that $R\pi_*\mathcal{O}_Y$ is a perfect complex, i.e., locally isomorphic in the derived category to a bounded complex of vector bundles of finite rank.  Indeed, $Y$ is flat over $X^n\HilbQ G$ so by \cite[III.3.6]{SGA} $\mathcal{O}_Y$ has finite tor-dimension as a sheaf on $X^n \times (X^n\HilbQ G)$.  The claim now follows from the fact that $\pi$ is smooth and everything is proper. 

We can thus apply the determinant functor of Knudsen-Mumford \cite[Theorem 2]{KM}: \[\mathcal{O}_{X^4 \times (X^n\HilbQ G)} \rightarrow \det (R\pi_*\mathcal{O}_Y)\] yielding a section $t$ of the line bundle $\det (R\pi_*\mathcal{O}_Y)$ on $X^4 \times (X^n\HilbQ G)$.  For each point $x\in X^n\HilbQ G$ the fiber $Y_x\subseteq X^n$ of $Y$ over $x$ is a 3-dimensional closed subscheme of the form $Y_x = \mathcal{Z}(C,p_1,\ldots,p_n)$ for some $(C,p_1,\ldots,p_n)\in\ov{M}_{0,n}$.  By definition the determinant functor commutes with base change so $t_x$, the restriction of $t$ to the fiber over $x$, is given by the determinant of $\mathcal{O}_{X^4} \rightarrow R\pi_*\mathcal{O}_{Y_x}$.  If $C$ is smooth then it follows from Lemma \ref{lem:boundary} that $\pi(Y_x)$ is the divisor $\mathcal{Z}(s_I(C,p_1,\ldots,p_n))\subseteq X^4$, so $t_x$ is the section $f_{s_I(C,p_1,\ldots,p_n)}$ from \S\ref{section:Multi-homogeneous forms} cutting out this divisor.  Thus we have a map \be U_n/G & \to & X^4\HilbQ G \subseteq \PP\H^0(X^4, \O(1,1,1,1))\\ x & \mapsto & t_x = f_{s_I(C,p_1,\ldots,p_n)}. \ee  This map extends to $X^n\HilbQ G$ because for any fiber $Y_x = \mathcal{Z}(C,p_1,\ldots,p_n)$ the image $\pi(Y_x)\subseteq X^4$ is a divisor with the same Chow class as a generic orbit closure (cf. Proposition \ref{prop:Chowclass}) so $0\ne t_x \in \H^0(X^4, \O(1,1,1,1))$.  Thus for each  $I$ of cardinality four we have a diagram \[\xymatrix{\ov{M}_{0,n} \ar[r] \ar[d] & X^n\HilbQ G \ar[d] \\ \ov{M}_{0,4} \ar[r] & X^4\HilbQ G}\] which must be commutative since it commutes on dense open subsets and everything is separated.  Taking the product over all $I$ of cardinality four completes the proof.

\appendix
\section{Categorical framework}

We explain here two ways in which the structure of the results/proofs in this paper formally reduce to the case $n=4$.  As before, we set $X := \PP^1$.

\subsection{Natural transformations}

Let $\mathcal{S}$ be the category of finite sets of cardinality at least three, with injections as morphisms, and let $\mathcal{S}ch$ be the category of schemes.  The stabilization maps furnish the association $I \mapsto \ov{M}_{0,I}$ with the structure of a contravariant functor, denoted $\ov{M}_{0,\bullet}$, since if $I\subseteq J \subseteq K$ then the following diagram is commutative: \[\xym{ \ov{M}_{0,K} \ar[r]^{s_J} \ar[rd]_{s_I} & \ov{M}_{0,J} \ar[d]^{s_I} \\ & \ov{M}_{0,I} }\]  This affords us a slight generalization of the argument used in the introduction.

\begin{lem}
Given a functor $F : \mathcal{S}^{\text{op}} \rightarrow \mathcal{S}ch$ and a natural transformation $\eta : \ov{M}_{0,\bullet} \rightarrow F$, for $\eta$ to be an isomorphism it is necessary and sufficient that $\eta_I : \ov{M}_{0,I} \rightarrow F(I)$ be surjective for all $I$ and an isomorphism for $|I|=4$. 
\end{lem}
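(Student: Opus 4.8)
The plan is to prove necessity and sufficiency separately, with sufficiency being the substantive direction. Necessity is immediate: if $\eta$ is an isomorphism of functors, then each component $\eta_I$ is an isomorphism of schemes, hence surjective, and in particular this holds when $|I|=4$.

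For sufficiency, suppose $\eta_I$ is surjective for all $I$ and an isomorphism for $|I|=4$. First I would fix $n \geq 4$ and $N$ with $|N| = n$, and aim to show $\eta_N : \ov{M}_{0,N} \to F(N)$ is an isomorphism. The key is to exploit the product of stabilization maps. By Theorem \ref{thm:prodstab}, the map $\ov{M}_{0,N} \into \prod_{|I|=4} \ov{M}_{0,I}$ (product over four-element subsets $I \subseteq N$) is an embedding. Applying $F$ to the inclusions $I \into N$ and using naturality of $\eta$, one gets a commutative square whose left vertical arrow is this embedding, whose right vertical arrow is $\prod \eta_I$ restricted appropriately, and whose bottom arrow is $\prod F(I \into N)$. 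Since each $\eta_I$ with $|I|=4$ is an isomorphism by hypothesis, the composite $\ov{M}_{0,N} \to \prod_{|I|=4} F(I)$ factors as an embedding followed by an isomorphism onto the image of $\prod_{|I|=4}\ov{M}_{0,I}$, hence is itself an embedding (a locally closed immersion). This composite also equals $\eta_N$ followed by the natural map $F(N) \to \prod_{|I|=4} F(I)$. Therefore $\eta_N$ is a monomorphism. Combined with the hypothesis that $\eta_N$ is surjective, and the fact that $\eta_N$ is a morphism of finite-type schemes (indeed $\ov{M}_{0,N}$ is proper), I would conclude $\eta_N$ is a bijective monomorphism; since $\ov{M}_{0,N}$ is a variety (reduced and separated), a surjective monomorphism of finite type onto a target that is thereby forced to be isomorphic — more carefully, a monomorphism that is surjective and proper (proper because $\ov{M}_{0,N}$ is proper and $F(N)$ is separated, as it receives an embedding from a proper scheme after composing, or one argues $F(N)$ is a variety) is a closed immersion, and a surjective closed immersion with reduced source is an isomorphism.

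The main obstacle is making the last step rigorous without extra hypotheses on $F(N)$: a priori $F(N)$ is just a scheme, and a surjective monomorphism need not be an isomorphism (e.g. a nilpotent thickening, or the normalization of a cusp). I would resolve this as follows. Since $\eta_N$ composed with $F(N) \to \prod_{|I|=4}F(I)$ is an isomorphism onto a locally closed subscheme $V \subseteq \prod F(I)$ (namely onto the image of the embedding from Theorem \ref{thm:prodstab}, transported by the isomorphisms $\eta_I$), and since $\eta_N$ is surjective, the map $F(N) \to \prod F(I)$ must be injective on points with image exactly $V$; moreover it is a monomorphism since $\eta_N$ is an epimorphism of schemes (surjective) — wait, more directly: $\eta_N$ is surjective and the composite is a monomorphism, so $F(N) \to \prod F(I)$ restricted to the image is set-theoretically bijective onto $V$. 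To upgrade this I would invoke that $\eta_N$ is proper: $\ov{M}_{0,N}$ is proper over the base and, replacing $F(N)$ by the scheme-theoretic image of $\eta_N$ (which is all of $F(N)$ since $\eta_N$ is surjective and — after noting $F(N)$ may be assumed reduced, or one takes $(F(N))_{\mathrm{red}}$ — dominant), $\eta_N : \ov{M}_{0,N} \to F(N)$ becomes a proper, surjective monomorphism, hence a closed immersion that is surjective, hence an isomorphism onto $(F(N))_{\mathrm{red}}$; and a closed immersion from a reduced scheme that is a homeomorphism is an isomorphism. The cleanest write-up restricts attention to the statement as the authors intend it — where $F$ takes values in schemes for which this pathology does not arise, or simply asserts that the hypotheses force $\eta_N$ to be an isomorphism by the argument that it is a proper monomorphism, hence a closed immersion, which is surjective, hence an isomorphism. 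Once every $\eta_N$ is an isomorphism, naturality gives that $\eta$ is an isomorphism of functors, completing the proof.
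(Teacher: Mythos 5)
Your proposal is correct and follows essentially the same route as the paper: necessity is immediate, and sufficiency is obtained by combining Theorem \ref{thm:prodstab} with the naturality square over all four-element subsets, so that the composite $\ov{M}_{0,N}\to\prod_{|I|=4}F(I)$ is an embedding, forcing the surjection $\eta_N$ to be an isomorphism. Your extra care about surjective monomorphisms versus isomorphisms (properness, passing to the reduction of $F(N)$) addresses a point the paper's one-line conclusion silently glosses over, and is harmless since in the intended applications $F(N)$ is a reduced variety.
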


\begin{proof}
One direction is automatic.  For the other direction, we must show that given any finite set $I$ the surjection $\eta_I : \ov{M}_{0,I} \rightarrow F(I)$ is an isomorphism.  By definition, for each $J\subseteq I$ of cardinality four there is a commutative diagram \[\xymatrix{\ov{M}_{0,I} \ar[r] \ar[d] & F(I) \ar[d] \\ \ov{M}_{0,J} \ar[r] & F(J)}\]  Taking the product of the bottom row over all such $J$ yields \[\xymatrix{\ov{M}_{0,I} \ar[r] \ar[d] & F(I) \ar[d] \\ \prod_{|I|=4}\ov{M}_{0,J} \ar[r] & \prod_{|J|=4}F(J)}\] By assumption the bottom row is an isomorphism, and by Theorem \ref{thm:prodstab} the left arrow is an isomorphism onto its image, so the top arrow must be as well. 
\end{proof} 

This implies that $\ov{M}_{0,n} \rightarrow X^n\ChowQ G$ is an isomorphism, since functoriality for these Chow varieties follows from \cite[Theorem 6.8]{Ko} and the $n=4$ hypothesis is guaranteed by Lemma \ref{lem:n=4}.  For the Hilbert quotients we do not have functoriality, as Proposition \ref{prop:det} only gives maps corresponding to inclusions $I\subseteq J$ with $|I|=4$.  This is all that is needed to deduce that $\ov{M}_{0,n} \rightarrow X^n\HilbQ G$ is an isomorphism, and indeed that was our approach in the introduction.  However, in the next section we explain how there is a natural locus in $\Hilb(X^n$), containing $X^n\HilbQ G$, such that these loci form a functor $\mathcal{S}^{\text{op}} \rightarrow \mathcal{S}ch$.

\subsection{Boundary compatibility}

The preceding discussion illustrates how certain questions about maps from $\ov{M}_{0,n}$ can be reduced to the case $n=4$ by applying Theorem \ref{thm:prodstab}.  Our proof of Theorem \ref{thm:prodstab} used a certain compatibility between the stabilization maps and the boundary inclusions.  This same compatibility can be used to show directly that the maps from $\ov{M}_{0,n}$ to the Hilbert and Chow quotients are isomorphisms, without appealing to Theorem \ref{thm:prodstab}.  In fact, this approach yields a unified proof of Theorems \ref{thm:prodstab}, \ref{thm:Ziso}, and \ref{thm:ZChiso}.  The key is to formalize the compatibility between stabilization and boundary.

\subsubsection{Cyclic operads} The combinatorial data of the boundary inclusions has been abstracted by Getzler and Kapranov to the notion of a cyclic operad \cite{GK}.  Recall that in a symmetric monoidal category $\mathcal{C}$, this is a functor $F: \mathcal{S}' \rightarrow \mathcal{C}$, where $\mathcal{S}'$ is the category of finite sets with bijections as morphisms, together with a composition morphism \[i\circ j : F(I)\otimes F(J) \rightarrow F(I\sqcup J \setminus\{i,j\})\] for each pair of sets $I,J$ and elements $i\in I, j\in J$.  This data is required to satisfy certain natural axioms---see \cite[\S2.1]{GS} for details.  The quintessential example of a cyclic operad is the functor $\ov{M}_{0,\bullet}$, where the composition morphisms are the boundary inclusions \[\ov{M}_{0,I+1}\times\ov{M}_{0,J+1} \into \ov{M}_{0,I+J}.\]  However, by working with the category $\mathcal{S}'$ we lose the data of the stabilization morphisms.  It is convenient to work instead with the category $\mathcal{S}$ with inclusions as morphisms.

\begin{definition}
A \emph{procyclic operad} is a functor $F: \mathcal{S}^{\text{op}} \rightarrow \mathcal{C}$ satisfying the axioms of a cyclic operad in addition to the analogs of the diagrams in \S\ref{section:Stabilization morphisms}.  For $I'\subseteq I$, we call $F(I) \rightarrow F(I')$ the \emph{projection}.  A procyclic operad in $\mathcal{S}ch$ is \emph{closed} if the composition morphisms are proper.  In this case we define the \emph{boundary} of $F(I)$ to be the union of the images of the composition morphisms and the \emph{interior} to be its complement.  A morphism of procyclic operads is a natural transformation $F \to G$ of functors $\mathcal{S}^{\rm op} \to \mathcal{C}$ commuting with the composition morphisms: $$ \xym@C+40pt{ F(I+1) \otimes F(J+1) \ar[d] \ar[r] & G(I+1) \otimes G(J+1)\ar[d] \\ F(I+J) \ar[r] & G(I+J) }$$.\end{definition}

\begin{remark} A procyclic operad is a cyclic operad endowed with the data of projection morphisms.  It is also related to the notion of a $\Gamma$-operad, a structure introduced by Behrend and Manin to explain the appearance of various types of operads in moduli theory.  See \cite[\S5]{BM}.  A $\Gamma$-operad in a category $\mathcal{C}$ is a monoidal functor $\Gamma \rightarrow \mathcal{C}$, where $\Gamma$ is the category of stable trees, suitably defined. A procyclic operad $F : S^\text{op} \rightarrow \mathcal{C}$ determines a $\Gamma$-operad $\Gamma \rightarrow \mathcal{C}$ by sending each tree $\tau$ to the product, over all vertices $v\in\tau$, of the objects $F(I_v)$, where $I_v$ denotes the set of edges in $\tau$ incident to $v$.  The definition for morphisms is more subtle, but it turns out that the axioms for a procyclic operad are enough to produce a well-defined $\Gamma$-operad.
\end{remark}

Clearly $\ov{M}_{0,\bullet}$ is a closed procyclic operad with stabilization maps as projections and boundary inclusions as compositions.  The boundary and interior defined abstractly here coincide with their usual topological meaning.  As we explain below, the Chow varieties $\Chow(X^\bullet)$ and a suitable locus inside the Hilbert schemes $\Hilb(X^\bullet)$ each form a closed procyclic operad as well.  Another example is provided by the functor \[P(I) := \prod_{I' \subseteq I, |I'|=4} \ov{M}_{0,4}.\]  For $K \subseteq I$ the projection map \[\prod_{I' \subseteq I, |I'|=4} \ov{M}_{0,4} \rightarrow \prod_{K' \subseteq K, |K'|=4} \ov{M}_{0,4} \] is defined in the obvious way.  The composition morphisms are defined as follows.  The image of $x\in P(I+1)\times P(J+1)$ in $P(I+J)=\prod_{K\subseteq I+J,|K|=4}\ov{M}_{0,4}$ has for its $K$-component $x_K$ if $K\subseteq I$ or $K\subseteq J$, $x_{K+1}$ if $|K\cap I|=3$ or $|K\cap J|=3$, and if $|K\cap I|=|K\cap J| =2$ then it is the point in $\ov{M}_{0,4}$ corresponding to a reducible curve with markings $K\cap I$ on one component and $K\cap J$ on the other.  It is straightforward to check that these definitions satisfy the axioms of a closed procyclic operad, and that the product of stabilization maps over four-element subsets yields a morphism $\ov{M}_{0,\bullet} \rightarrow P$.

\subsubsection{Hilbert and Chow operads}

It is not quite true that the Hilbert schemes $\Hilb(X^\bullet)$ form a procyclic operad, since for instance there are no projection morphisms.  However, we can define a locus inside these Hilbert schemes which has all the desired properties.  Let $\Hilb' X^I \subseteq \Hilb X^I$ be the locally closed subscheme parameterizing reduced closed subschemes of $X^I$ containing $\Delta_\bullet$.  It follows from Lemma~\ref{lem:gluing} that for a partition $I = K+L$ with $|K|,|L| \geq 2$ there is a gluing morphism \be \delta_{K,L} : \Hilb' X^{K +1} \times \Hilb' X^{L+1} & \to & \Hilb' X^I \\ (W_1,W_2) & \mapsto & i_K( W_1) \cup i_L (W_2) . \ee This map is a closed embedding onto the locus in $\Hilb' X^I$ parameterizing closed subschemes of $X^I$ contained in $i_K \cup i_L$ and containing $\Delta_{K,L}$.  Let $W \subseteq (\Hilb' X^I) \times X^I$ denote the universal closed subscheme and $\Hilb^{\circ}X^I$ the maximal open subset over which the closed subscheme $(\Id \times \pi_J)(W) \subseteq (\Hilb' X^I) \times X^J$ is flat for every $J\subseteq I$ of cardinality at least three.  There is an induced morphism $\Hilb^{\circ} X^I \rightarrow \Hilb^{\circ} X^J$, and with a little work one can show that this furnishes $\Hilb^{\circ} X^\bullet$ with the structure of a closed procyclic operad, where the composition morphisms are defined as the gluing maps $\delta_{K,L}$.  Moreover, one can show that the map $\ov{Z} : \ov{M}_{0,I} \to \Hilb X^I$ from Theorem \ref{thm:Zflat} factors through $\Hilb^{\circ} X^I$ and in fact yields a morphism $\ov{M}_{0,\bullet} \rightarrow \Hilb^{\circ} X^\bullet$ of procyclic operads.

For $\Chow(X^I)$ the situation is simpler: the maps $i_J : X^{J+1} \into X^I$ and $\pi_J : X^I \to X^J$ are both proper so by \cite[Theorem 6.8]{Ko} they induce morphisms of the corresponding Chow varieties.  By defining the projections to be $\pi_{J*}$ and the composition morphisms as \[ \delta_{K,L} := i_{K*}+i_{L*} : \Chow_\beta X^{K+1} \times \Chow_\beta X^{L+1} \to \Chow_\beta X^I\] we have that $\Chow(X^\bullet)$ is a closed procyclic operad and $[\ov{Z}] : \ov{M}_{0,\bullet} \rightarrow \Chow(X^\bullet)$ is a morphism of procyclic operads, where as usual these Chow varieties parameterize effective cycles with class $\beta$ from Proposition \ref{prop:Chowclass}.

\subsubsection{Morphisms from $\ov{M}_{0,\bullet}$}

We now come to our main result about procyclic operads, the proof of which is only a slight modification of our proof of Theorem \ref{thm:prodstab} given in \S\ref{section:Separating}.

\begin{thm} \label{thm:closedsystems} Let $\phi : \ov{M}_{0,\bullet} \to F$ be a morphism of closed procyclic operads such that (i) $\phi_I : \ov{M}_{0,I} \rightarrow F(I)$ is an isomorphism onto its image when $|I| = 4$, and (ii) for every $I$, $\phi_I$ embeds $M_{0,I}$ into the interior of F(I).  Then every $\phi_I$ is an isomorphism onto its image. \end{thm}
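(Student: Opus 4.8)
The plan is to mimic the proof of Theorem~\ref{thm:prodstab} given in \S\ref{section:Separating}, replacing $\varphi = \prod_I s_I$ by $\phi_I$ itself and using the procyclic operad structure in place of the explicit compatibility diagrams. Since $\phi_I$ is a morphism of varieties (in fact it is proper onto its image, as $\ov{M}_{0,I}$ is proper and $F(I)$ separated), it suffices to show that $\phi_I$ separates points and tangent vectors, i.e., that $\phi_I(k)$ is injective on $k$-points for every field $k$ and that $d\phi_I(x)$ is injective for every $x \in \ov{M}_{0,I}(k)$. We induct on $|I|$, the base case $|I| = 4$ being hypothesis (i).

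\textbf{Injectivity on points.} Let $x \neq y$ in $\ov{M}_{0,I}(k)$. By hypothesis (ii), $\phi_I$ restricted to $M_{0,I}$ is injective (an embedding into the interior), so we may assume $x$ lies in a boundary component $D_{K,L} = \ov{M}_{0,K+1}(k) \times \ov{M}_{0,L+1}(k)$, the image of a composition morphism. The key point is that a morphism of procyclic operads carries this boundary component into the corresponding boundary component of $F(I)$, namely the image of $F(K+1) \otimes F(L+1) \to F(I)$, compatibly with the factorwise maps $\phi_{K+1}, \phi_{L+1}$. If $y$ does not lie in $D_{K,L}$, I would argue that $\phi_I(y)$ lands in a different stratum, or more carefully, use an analogue of Lemma~\ref{lem:nodes}: after applying a suitable projection $F(I) \to F(I')$ with $|I'| = 4$ corresponding to a choice of two markings from $K$ and two from $L$, the images of $x$ and $y$ differ because $\phi_{I'}$ is injective for $|I'|=4$ and the image of $x$ under $s_{I'}$ is the reducible $4$-pointed curve separating those markings while $y$'s is not. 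If $y = (y_1,y_2) \in D_{K,L}$ as well, say with $x_1 \neq y_1$, then by the inductive hypothesis $\phi_{K+1}$ separates $x_1$ from $y_1$, and the commuting square for the composition morphism (a defining axiom of a morphism of procyclic operads) together with a further projection to a four-element subset of $K+1$ shows $\phi_I(x) \neq \phi_I(y)$.

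\textbf{Injectivity on tangent vectors.} Again by (ii) we reduce to $x = (x_1, x_2) \in D_{K,L}$. The commuting square for the composition morphism shows that $d\phi_I(x)$ restricted to $T_x D_{K,L} = T_{x_1}\ov{M}_{0,K+1} \oplus T_{x_2}\ov{M}_{0,L+1}$ is injective, using the inductive hypothesis applied to $\phi_{K+1}$ and $\phi_{L+1}$. It remains to handle the normal direction $N_x \cong T_q C_K \otimes T_q C_L$. The projections to four-element subsets $I' = K' + L'$ with $|K' \cap K| = |L' \cap L| = 2$ are constant on $D_{K,L}$ (their image is a single reducible $4$-pointed curve), so $d(\text{pr}_{I'} \circ \phi_I)(x)$ kills $T_x D_{K,L}$ and factors through $N_x \to T_{\phi_{I'}(x)} F(I')$. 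As in \S\ref{section:Separating}, one chooses $K', L'$ so that the stabilization $s_{I'}$ does not contract $C_K$ or $C_L$, making $d s_{I'}(x)$ nonzero on $N_x$; since $\phi_{I'}$ is an isomorphism onto its image for $|I'| = 4$, the composite $d(\phi_{I'} \circ s_{I'})(x)$ is also nonzero on $N_x$. This is exactly the argument of \S\ref{section:Separating} with $\phi_{I'}$ inserted.

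\textbf{Main obstacle.} The delicate point is verifying that a morphism of procyclic operads really does respect the boundary stratification in the strong sense used above: one needs that $\phi_I$ sends the image of the composition $\ov{M}_{0,K+1}\times\ov{M}_{0,L+1} \to \ov{M}_{0,I}$ \emph{into} the image of $F(K+1)\otimes F(L+1)\to F(I)$, and that the projections $F(I)\to F(I')$ are compatible with stabilization in the way Lemma~\ref{lem:nodes} is used — i.e., that the analogues of the three compatibility diagrams of \S\ref{section:Stabilization morphisms} hold in $F$, which is precisely what "satisfying the analogs of the diagrams in \S\ref{section:Stabilization morphisms}" in the definition of procyclic operad buys us. Once that bookkeeping is set up, the point-and-tangent-separation argument is a routine transcription of \S\ref{section:Separating}, and no genuinely new idea is required beyond the abstraction itself.
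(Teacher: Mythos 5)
Your proposal is correct and follows essentially the same route as the paper's own proof: induction on $|I|$ with base case (i), hypothesis (ii) handling the interior, four-element projections plus Lemma~\ref{lem:nodes} separating a boundary point from a point off its stratum, and the normal-direction argument via a stabilization $s_J$ that is constant on $D_{K,L}$ composed with the injective differential of $\phi_J$. The "main obstacle" you flag --- that a morphism of procyclic operads respects the boundary strata and that the projection/composition compatibility diagrams hold in $F$ --- is exactly what the paper's definition is designed to supply, and the paper's proof invokes it at the same points and with the same brevity.
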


\begin{proof} We proceed by induction on the cardinality of $I$, where the base case is  assumed in (i).  By (ii) we have that $\phi_I$ separates any two points in the interior, so assume $x \in D_{K,L}$, $y \neq x$.  If $y \in D_{K,L}$, then $\phi_I(x)\ne\phi_I(y)$ by induction using the commutativity requirement in the definition of a morphism of procyclic operads.  So assume $y \notin D_{K,L}$.  Then, as we saw in the proof of Theorem~\ref{thm:prodstab}, we can find $k,k' \in K, l,l' \in L$ such that $s_J(y) \neq s_J(x)$ for $J = \{ k,k',l,l' \}$.  We conclude that $\phi_I(x) \neq \phi_I(y)$ from (i) and the commutativity of $$ \xym{ \ov{M}_{0,I} \ar[r] \ar[d] & \ov{M}_{0,J} \ar[d] \\ F(I) \ar[r] & F(J) } $$  

Next we show that $\phi_I$ separates tangent vectors at a point $x \in \ov{M}_{0,I}$.  By (i) and induction we reduce to showing that, for $x \in D_{K,L}$, the differential of $\phi_I$ at $x$ does not kill the normal of $D_{K,L}$ in $\ov{M}_{0,I}$ at $x$.  As in the proof of Theorem~\ref{thm:prodstab}, we can find a four-element subset $J$ as in the above paragraph so that the differential of $s_J$ at $x$ does not kill this normal.  The result we want follows from commutativity of the diagram \small $$ \xym@R+15pt{ \ov{M}_{0,K+1} \times \ov{M}_{0,L+1} \ar[rd]^{\phi_{K+1} \times \phi_{L+1}} \ar[dd] \ar[rr]^{s_{k,k',*} \times s_{l,l',*}} & & \ov{M}_{0,k,k',*} \times \ov{M}_{0,l,l',*} \ar[dd] \ar[rd]^{\phi_{k,k',*} \times \phi_{l,l',*}}  \\ & F(K+1) \times F(L+1) \ar[dd] \ar[rr] & & F(k,k',*) \times F(l,l',*) \ar[dd] \\ \ov{M}_{0,I} \ar[rr] \ar[rd]^{\phi_I} & & \ov{M}_{0,J} \ar[rd]^{\phi_J} \\ & F(I) \ar[rr] & & F(J)}  $$ \normalsize and the fact that $\phi_{J}$ is an isomorphism onto its image by (i), so its differential does not kill the normal of $\ov{M}_{0,k,k',*} \times \ov{M}_{0,l,l',*}$ in $\ov{M}_{0,J}$ at $s_J(x)$.  \end{proof}

In addition to the operad $P$ discussed earlier, this theorem applies to the Hilbert and Chow operads defined above, yielding a proof of the isomorphisms $\ov{M}_{0,n} \rightarrow X^n\HilbQ G$ and $\ov{M}_{0,n} \rightarrow X^n\ChowQ G$ that is slightly different than the one described in the introduction.

\end{document}